\documentclass[12pt,reqno]{amsart}

\usepackage{a4wide}

\pdfoutput=1

\usepackage[utf8]{inputenc}
\usepackage[english]{babel}
\usepackage{amsmath,amsfonts,amsthm,amssymb,amscd,amsbsy}
\usepackage[all]{xy}
\usepackage{graphicx}
\usepackage{euscript}
\usepackage{mathtext}
\usepackage{upgreek}
\usepackage{hyperref}
\hypersetup{
    colorlinks=true,
    linkcolor=blue,
    filecolor=magenta,      
    urlcolor=cyan,
}
\usepackage{euscript}
\usepackage{mathtools}
\usepackage{microtype}
\usepackage{setspace}
\usepackage{fancyhdr}
\usepackage{pict2e}
\usepackage{mathrsfs}
\usepackage[shortlabels]{enumitem}
\usepackage{stackengine}
\usepackage{changepage}
\usepackage{parskip}
\usepackage{tikz}
\usetikzlibrary{arrows}
\usepackage{lmodern}
\usepackage{csquotes}
\usepackage{chngcntr}
\counterwithout{equation}{section}

\usepackage{xpatch}
\xapptocmd\normalsize{%
 \abovedisplayskip=10pt plus 1pt minus 3pt
 \abovedisplayshortskip=1pt plus 3pt
 \belowdisplayskip=10pt plus 2pt minus 3pt
 \belowdisplayshortskip=8pt plus 3pt minus 2pt
}{}{}

\setlength{\footskip}{30pt}

\numberwithin{equation}{section}

\usepackage[
backend=biber,
style=ieee,
sorting=nyt,
giveninits=true,
dashed=false,
maxbibnames=99
]{biblatex}
\addbibresource{main.bib}

\DeclareSymbolFont{largesymbols}{OMX}{cmex}{m}{n}

\newtheoremstyle{theoremstyle}
  {0mm} 
  {0mm} 
  {\itshape} 
  {} 
  {\bfseries} 
  {.} 
  {.5em} 
  {} 

\theoremstyle{theoremstyle}
\newtheorem{proposition}{Proposition}[section]
\newtheorem{lemma}[proposition]{Lemma}
\newtheorem{theorem}[proposition]{Theorem}
\newtheorem*{maintheorem}{Main Theorem}

\newtheorem{corollary}{Corollary}[proposition]

\newtheoremstyle{examplestyle}
  {0mm} 
  {0mm} 
  {} 
  {} 
  {\bfseries} 
  {.} 
  {.5em} 
  {} 

\theoremstyle{examplestyle}
\newtheorem{remark}[proposition]{Remark}

\newtheoremstyle{remarks}
  {0mm} 
  {0mm} 
  {} 
  {} 
  {\bfseries} 
  {.} 
  {-.5em} 
  {} 

\theoremstyle{remarks}

\renewcommand{\Im}{\hspace{0.08em}\mathrm{Im}\hspace{0.04em}}
\newcommand{\Ker}{\mathrm{Ker} \hspace{0.04em}}
\newcommand{\ccdot}{\hspace{-1.2pt} \cdot}

\newcommand{\rank}{\mathrm{rank} \hspace{0.1em}}

\newcommand{\codim}{\mathrm{codim}}

\newcommand{\Aut}{\mathrm{Aut} \hspace{0.06em}}

\newcommand{\End}{\mathrm{End}}

\newcommand{\DD}{\mathrm{DD}}
\newcommand{\Lie}{\mathrm{Lie} \hspace{0.06em}}

\newcommand{\defeq}{\vcentcolon=}

\newcommand{\Rl}{\mathbb{R}}
\newcommand{\Cx}{\mathbb{C}}
\newcommand{\Hq}{\mathbb{H}}
\newcommand{\Oo}{\mathbb{O}}
\newcommand{\Ad}{\mathrm{Ad} \hspace{0.06em}}
\newcommand{\ad}{\mathrm{ad} \hspace{0.06em}}
\newcommand{\mk}[1]{\mathfrak{{#1}}}
\newcommand{\mb}[1]{\mathbb{{#1}}}
\newcommand{\mr}[1]{\mathrm{{#1}}}
\newcommand{\mc}[1]{\mathcal{{#1}}}

\newcommand{\set}[1]{\hspace{-0.8pt} \left \{ \hspace{0.03em} {#1} \hspace{0.03em} \right \}}

\newcommand{\cross}[2]{\langle \hspace{0.1em} {#1} \hspace{0.1em} | \hspace{0.1em} {#2} \hspace{0.1em} \rangle \hspace{0.02em}}
\newcommand{\restr}[2]{{\left.\kern-\nulldelimiterspace #1 \vphantom{\big|} \right|_{#2}}}

\renewcommand{\O}{\mathrm{O}}

\newcommand\altxrightarrow[2][0pt]{\mathrel{\ensurestackMath{\stackengine%
  {\dimexpr#1-7.5pt}{\xrightarrow{\phantom{#2}}}{\scriptstyle\!#2\,}%
  {O}{c}{F}{F}{S}}}}
\newcommand{\isoto}{\altxrightarrow[1pt]{\sim}}

\newcommand{\mysetminusD}{\hbox{\tikz{\useasboundingbox (-0.5pt,-0.5pt) rectangle (5pt,8pt); \draw[line width=0.6pt,line cap=round] (3.5pt,-1.5pt) -- (0,7.25pt);}} \hspace{-1pt}}
\newcommand{\mysetminusT}{\mysetminusD}
\newcommand{\mysetminusS}{\hbox{\tikz{\draw[line width=0.45pt,line cap=round] (2pt,0) -- (-0.5pt,5pt);}} \hspace{1pt}}
\newcommand{\mysetminusSS}{\hbox{\tikz{\draw[line width=0.4pt,line cap=round] (1.5pt,0) -- (0,3pt);}}}

\newcommand{\mysetminus}{\mathbin{\mathchoice{\mysetminusD}{\mysetminusT}{\mysetminusS}{\mysetminusSS}}}
 
\makeatletter
\newcommand{\extp}{\@ifnextchar^\@extp{\@extp^{\,}}}
\def\@extp^#1{\mathop{\bigwedge\nolimits^{\!#1}}}
\makeatother

\makeatletter
\DeclareRobustCommand{\loplus}{\mathbin{\mathpalette\dog@lsemi{+}}}
\DeclareRobustCommand{\roplus}{\mathbin{\mathpalette\dog@rsemi{+}}}

\newcommand{\dog@rsemi}[2]{\dog@semi{#1}{#2}{-90,90}}
\newcommand{\dog@lsemi}[2]{\dog@semi{#1}{#2}{270,90}}
\newcommand{\dog@semi}[3]{%
  \begingroup
  \sbox\z@{$\m@th#1#2$}%
  \setlength{\unitlength}{\dimexpr\ht\z@+\dp\z@\relax}%
  \makebox[\wd\z@]{\raisebox{-\dp\z@}{%
    \begin{picture}(1,1)
    \linethickness{\variable@rule{#1}}
    \roundcap
    \put(0.5,0.5){\makebox(0,0){\raisebox{\dp\z@}{$\m@th#1#2$}}}
    \put(0.5,0.5){\arc[#3]{0.5}}
    \end{picture}%
  }}%
  \endgroup
}
\newcommand{\variable@rule}[1]{%
  \fontdimen8  
  \ifx#1\displaystyle\textfont3\else
    \ifx#1\textstyle\textfont3\else
      \ifx#1\scriptstyle\scriptfont3\else
        \scriptscriptfont3\relax
  \fi\fi\fi
}
\makeatother

\begin{document}

\title[Homogeneous codimension one foliations]{Homogeneous codimension one foliations on reducible symmetric spaces of noncompact type}
\author{Ivan Solonenko}
\address{Department of Mathematics, King's College London, United Kingdom}
\email{ivan.solonenko@kcl.ac.uk}

\begin{abstract}
We extend the classification of homogeneous codimension-one foliations on irreducible Riemannian symmetric spaces of noncompact type obtained by Berndt and Tamaru in \cite{berndttamarufoliations} to the reducible case, thus completing it for all noncompact symmetric spaces.
\end{abstract}

\maketitle

\vspace{-1.5em}

\counterwithout{equation}{section}

\section{Introduction}

A foliation $\mc{F}$ with connected properly embedded leaves on a connected Riemannian manifold $M$ is called homogeneous if the subgroup of isometries of $M$ preserving the leaves of $\mc{F}$ acts transitively on each leaf. Given a connected Lie group $G$ acting properly and isometrically on $M$ without singular orbits, the orbit foliation of $G$ is homogeneous, and, by design, every homogeneous foliation arises in this way. We say that $\mc{F}$ is polar (resp., hyperpolar) if it is the orbit foliation of a polar (resp., hyperpolar) action on $M$. Perhaps the easiest class of homogeneous foliations consists of foliations of codimension one. These correspond to cohomogeneity-one actions and are always hyperpolar. Thus, homogeneous codimension-one foliations are an interesting object of study in the intersection of the theories of polar actions and Riemannian foliations.

In \cite{berndttamarufoliations}, Berndt and Tamaru invented two new methods for constructing homogeneous codimension-one foliations on symmetric spaces of noncompact type. One of them produces such a foliation -- denoted by $\mc{F}_\ell$ -- for every one-dimensional linear subspace $\ell \subseteq \mk{a}$, where $\mk{a}$ is a maximal abelian subspace in the $(-1)$-eigenspace of a Cartan involution of the Lie algebra $\mk{g}$ of the isometry group of $M$. The other method produces a foliation $\mc{F}_{\upalpha_i}$ for each of the simple restricted roots $\upalpha_i \in \mk{a}^*$. These two types of foliations are parametrized by $\mb{P}\mk{a} \simeq \Rl P^{r-1}$ (here $r = \rank(M) = \dim (\mk{a})$) and $\set{\upalpha_1, \ldots, \upalpha_r}$, respectively. Restricting to the case when $M$ is irreducible, the authors proved that every homogeneous codimension-one foliation on it arises in one of these two ways -- up to isometric congruence. Moreover, if we let $\DD_M$ denote the Dynkin diagram of $M$, then its automorphism group $\Aut(\DD_M)$ acts naturally on $\set{\upalpha_1, \ldots, \upalpha_r}$ and $\mk{a}$, and two foliations $\mc{F}_\ell$ and $\mc{F}_{\ell'}$ (resp., $\mc{F}_{\upalpha_i}$ and $\mc{F}_{\upalpha_j}$) are congruent if and only if $\ell$ and $\ell'$ (resp., $\upalpha_i$ and $\upalpha_j$) lie in the same $\Aut(\DD_M)$-orbit (whereas the foliations $\mc{F}_\ell$ and $\mc{F}_{\upalpha_i}$ are never congruent). This gives the complete classification of homogeneous codimension-one foliations on irreducible symmetric spaces of noncompact type and shows that their moduli space is isomorphic to 
$$
(\mathbb{R}P^{r-1} \sqcup \set{1, \ldots, r})/\Aut(\mathrm{DD_M}).
$$
In particular, if $M$ is of rank one and thus isometric to a hyperbolic space over $\Rl, \Cx, \Hq,$ or $\Oo$, it has only two homogeneous codimension-one foliations up to congruence: $\mc{F}_\mk{a}$ and $\mc{F}_{\upalpha_1}$.

In their subsequent paper \cite{hyperpolarfoliations} with D\'{i}az-Ramos, Berndt and Tamaru came up with a geometric construction that allows one to produce homogeneous hyperpolar foliations on any symmetric space of noncompact type from the foliations $\mc{F}_{\upalpha_1}$ on hyperbolic spaces and foliations by parallel affine subspaces on the Euclidean spaces. They went on to show that every homogeneous hyperpolar foliation on $M$ arises via this construction up to congruence. Notably, they do not impose the irreducibility assumption on $M$. On the other hand, it is not generally known when two foliations obtained by this construction are congruent to each other. When applied to foliations of codimension one, their method produces precisely the foliations $\mc{F}_\ell \, (\ell \subseteq \mk{a})$ and $\mc{F}_{\upalpha_i} \, (1 \leqslant i \leqslant r)$ defined above. Altogether, this implies that on any symmetric space of noncompact type, any homogeneous codimension-one foliation is congruent to either some $\mc{F}_\ell$ or some $\mc{F}_{\upalpha_i}$. Hence, to complete the classification of such foliations, one just needs to tell when two foliations of the form $\mc{F}_\ell$ (resp., $\mc{F}_{\upalpha_i}$) are congruent to each other.

In this paper, we accomplish this by showing that the classification in the irreducible case works verbatim in the situation when $M$ is reducible:

\begin{maintheorem}
Let $M$ be a symmetric space of noncompact type and rank $r$. Then, given one-dimensional linear subspaces $\ell, \ell' \subseteq \mk{a}$ (resp., simple roots $\upalpha_i, \upalpha_j \in \set{\upalpha_1, \ldots, \upalpha_r}$), the foliations $\mc{F}_\ell$ and $\mc{F}_{\ell'}$ (resp., $\mc{F}_{\upalpha_i}$ and $\mc{F}_{\upalpha_j}$) are isometrically congruent if and only if $\ell$ and $\ell'$ (resp., $\upalpha_i$ and $\upalpha_j$) differ by some $P \in \Aut(\DD_M)$. Consequently, the moduli space of homogeneous codimension-one foliations on $M$ is isomorphic to
\vspace{-0.2em}$$
(\mathbb{R}P^{r-1} \sqcup \set{1, \ldots, r})/\Aut(\mathrm{DD_M}).
$$
\end{maintheorem}

\vspace{-1em}

One noteworthy caveat is that we make the routine assumption that the Riemannian metric of $M$ is normalized in such a way that it agrees with the Killing form of the Lie algebra of the isometry group. When $M$ is irreducible, this assumption is relatively benign, for it amounts to simply rescaling the Riemannian metric by a constant factor. It poses a nontrivial restriction, however, when $M$ is reducible, because different de Rham factors of $M$ may carry different normalization constants in general. Since the authors of \cite{hyperpolarfoliations} make this assumption as well, it remains an open question how much of the classification remains valid in the reducible case when the Riemannian metric is normalized in a different way.

The paper is organized as follows. In Section \ref{prerequisites} we review some rudiments of the theories of symmetric spaces of noncompact type and parabolic subgroups that will be necessary in the rest of paper. We also prove a somewhat folklore result about the isometry group of a Riemannian manifold decomposed as a Riemannian product. In Section \ref{main_section} we first introduce the results of Berndt, Tamaru, and D\'{i}az-Ramos and then prove the Main Theorem. The paper is concluded by Section \ref{appendix}, in which we give an alternative -- more algebraic -- proof of the uniqueness result for foliations of the type $\mc{F}_\ell$.

\textbf{Acknowledgments.} I am grateful to my supervisor J\"{u}rgen Berndt for his careful guidance and countless invaluable discussions on the topic.

\vspace{0.05em}

\section{Prerequisites}\label{prerequisites}

The purpose of this section is to establish the notation and terminology necessary to understand the classification results of Berndt, Tamaru, and D\'{i}az-Ramos as well as the rest of the paper. We also prove a useful result pertaining to the isometry group of a Riemannian manifold decomposed as a Riemannian product. We follow the notation established in \cite{berndttamarufoliations} and \cite{hyperpolarfoliations} and refer to the same articles as well as \cite{knapp, helgason, kobayashi_nomizu_I} for more detailed expositions.

\subsection{Polar and codimension-one foliations}

Let $M$ be a connected Riemannian manifold, and let $\mc{F}$ be a foliation on $M$ with connected properly embedded leaves. Then we call $\mc{F}$ \textit{homogeneous} if the group $I(M, \mc{F}) = \set{\upvarphi \in I(M) \mid \upvarphi(F) = F \hspace{0.35em} \text{for every leaf} \hspace{0.35em} F \in \mc{F}}$ acts transitively on each leaf of $\mc{F}$. Since the leaves are properly embedded, it follows that $I(M, \mc{F})$ is a closed Lie subgroup of $I(M)$. Moreover, its identity component $I^0(M, \mc{F})$ also acts transitively on each leaf because the leaves are connected. Conversely, given a connected Lie group $G$ acting properly and isometrically on $M$ without singular orbits, the leaves of the action form a homogeneous foliation. Note that the leaves of a homogeneous foliation are equidistant from each other.

Assume that $M$ is complete. We say that a homogeneous foliations is \textit{polar} if it is the orbit foliation of a polar action on $M$. In other words, $\mc{F}$ is polar if there exists a complete immersed submanifold $\Upsigma \subseteq M$, called a section, that intersects all the leaves of $\mc{F}$ and does so orthogonally. If exist, all sections are isometrically congruent to each other and necessarily totally geodesic. Many authors usually assume sections to be embedded or even properly embedded. A homogeneous polar foliation is called \textit{hyperpolar} if its sections are flat. Homogeneous foliations of codimension one turn out to be automatically hyperpolar. Indeed, given such a foliation $\mc{F}$, its sections should be just geodesics normal to its leaves. One can show that any normal geodesic intersects all the leaves, does so orthogonally, and its image is an immersed submanifold, hence a section.

\subsection{The Iwasawa and horospherical decompositions}\label{IHdecompositions}

Let $M$ be a symmetric space of noncompact type. We denote $G = I^0(M), \widetilde{G} = I(M),$ and $\mk{g} = \Lie(G)$. Pick any point $o \in M$ and write $\widetilde{K} = \widetilde{G}_o$ and $K = G_o = \widetilde{K}^0$ for the isotropy groups at $o$ and $\mk{k} = \Lie(K)$ for the isotropy Lie subalgebra of $\mk{g}$. We have the corresponding Cartan decomposition $\mk{g} = \mk{k} \oplus \mk{p}$ and the Cartan involution $\uptheta \in \Aut(\mk{g})$ that has $\mk{k}$ and $\mk{p}$ as its $(+1)$- and $(-1)$-eigenspaces, respectively. We identify $\mk{p} \cong T_oM$ in the usual way. If we denote the Killing form of $\mk{g}$ by $B$, then $B_\uptheta(X,Y) = -B(X,\uptheta Y)$ is a $\widetilde{K}$-invariant inner product on $\mk{g}$. The form $B_\uptheta$ coincides with $B$ on $\mk{p}$ and equals $-B$ on $\mk{k}$. The Cartan decomposition is orthogonal with respect to both $B$ and $B_\uptheta$. We stick with $B_\uptheta$ as our default inner product on $\mk{g}$. Whenever $U \subset V$ are subspaces of $\mk{g}$, we write $V \ominus U$ for the orthogonal complement of $U$ in $V$. We make a standard but pivotal assumption that the Riemannian metric of $M$ at $o$ coincides with the restriction of $B_\uptheta$ (hence of $B$) to $\mk{p} \cong T_oM$. We will sometimes refer to this for brevity as ``the Riemannian metric on $M$ comes from the Killing form''.

\vspace{-0.2em}

Pick a maximal abelian subspace $\mk{a}$ in $\mk{p}$ and write $r = \dim(\mk{a}) = \rank(M)$. The connected Lie subgroup $A \subset G$ with Lie algebra $\mk{a}$ is closed and abelian and its orbit $A \ccdot o \simeq \mb{E}^r$ is a maximal flat in $M$. The restriction of $B_\uptheta$ to $\mk{a}$ is an inner product and thus gives an isomorphism $\mk{a} \isoto \mk{a}^*$ and the corresponding inner product on $\mk{a}^*$. We have the restricted root system $\Upsigma \subset \mk{a}^*$, which gives rise to the restricted root space decomposition $\mk{g} = \mk{g}_0 \oplus \bigoplus_{\upalpha \in \Upsigma} \mk{g}_\upalpha$. Here $\mk{g}_0 = \mk{k}_0 \oplus \mk{a}$, where $\mk{k}_0 = Z_\mk{k}(\mk{a}) = N_\mk{k}(\mk{a})$. The summands of the restricted root space decomposition are pairwise orthogonal with respect to $B_\uptheta$. Note that $\uptheta \mk{g}_\upalpha = \mk{g}_{-\upalpha}$. Finally, we make a choice of positive roots $\Upsigma^+ \subset \Upsigma$ and write $\Uplambda = \set{\upalpha_1, \ldots, \upalpha_r} \subseteq \Upsigma^+$ for the set of simple roots. The sum $\mk{n} = \bigoplus_{\upalpha \in \Upsigma^+} \mk{g}_\upalpha$ is a nilpotent subalgebra of $\mk{g}$. We have a vector space decomposition $\mk{g} = \mk{k} \oplus \mk{a} \oplus \mk{n}$ called the Iwasawa decomposition. The connected Lie subgroup of $G$ corresponding to $\mk{n}$ is closed and will be denoted by $N$. The multiplication induces a diffeomorphism $KAN \isoto G$ called the global Iwasawa decomposition. Since $K$ is a maximal compact subgroup of $G$, the subgroups $A$ and $N$ are simply connected. Their product $S = AN$ is a simply connected closed solvable subgroup of $G$ (with Lie algebra $\mk{s} = \mk{a} \oplus \mk{n}$) that acts simply transitively on $M$. In particular, $M$ can be realized, albeit not uniquely, as a simply connected solvable Lie group with a suitable left-invariant Riemannian metric. Every root $\upalpha$ can be carried to a vector $H_\upalpha \in \mk{a}$ along the isomorphism between $\mk{a}$ and $\mk{a}^*$. On the other hand, the basis $\upalpha_1, \ldots, \upalpha_r$ for $\mk{a}^*$ gives the dual basis for $\mk{a}$, which we denote by $H^{\upalpha_1}, \ldots, H^{\upalpha_r}$. By definition, $\cross{H^{\upalpha_i}}{H_{\upalpha_j}} = \upalpha_j(H^{\upalpha_i}) = \updelta_{ij}$.

\vspace{-0.15em}

A linear endomorphism $T \in \End(\mk{a}^*)$ is called an automorphism of $\Upsigma$ if it maps $\Upsigma$ onto itself and preserves the integers $n_{\upalpha \upbeta} = \frac{2 \cross{\upalpha}{\upbeta}}{||\upbeta||^2}$, i.e. $n_{F(\upalpha)F(\upbeta)} = n_{\upalpha\upbeta}$ for all $\upalpha, \upbeta \in \Upsigma$. Such $T$ is necessarily a linear automorphism and, since the inner product on $\mk{a}^*$ comes from the Killing form $B$, in fact an orthogonal transformation of $\mk{a}^*$. The group $\Aut(\Upsigma)$ is a finite subgroup of $\O(\mk{a}^*)$ and it contains the Weyl group $\mr{W}(\Upsigma)$ as a normal subgroup. A choice of positive roots allows to split the short exact sequence $\mr{W}(\Upsigma) \hookrightarrow \Aut(\Upsigma) \twoheadrightarrow \Aut(\Upsigma)/\mr{W}(\Upsigma)$ as a semidirect product. Indeed, having fixed $\Upsigma^+$, note that $\Uplambda$ gives rise to a Dynkin diagram, which we denote by $\DD_M$. We treat it as a vertex-weighted graph (with some edges oriented and doubled/tripled and some vertices "marked" in case they correspond to a simple root whose double is also root) where each vertex is assigned the multiplicity of the corresponding simple root, i.e. the dimension of the corresponding root subspace. We will be working with automorphisms of $\DD_M$, which we require by definition to preserve the multiplicities of the vertices. Any such automorphism $P \in \Aut(\DD_M)$ gives rise to a linear automorphism of $\mk{a}^*$ -- denoted by the same letter -- simply by permuting the basis vectors in $\Uplambda$. One can easily see that $P \in \Aut(\Upsigma)$ and the map $\Aut(\DD_M) \to \Aut(\Upsigma)$ is an injective group homomorphism. Moreover, its image and $\mr{W}(\Upsigma)$ do not intersect and their product is the whole automorphism group of $\Upsigma$, hence $\Aut(\Upsigma) \simeq \mr{W}(\Upsigma) \rtimes \Aut(\DD_M)$. Note that there may be elements in $\Aut(\DD_M)$ interchanging some connected components of $\DD_M$ -- precisely when $M$ has isometric de Rham factors or, equivalently, when $\mk{g}$ has isomorphic simple ideals (this is one of the several places where the assumption on the Riemannian metric of $M$ plays an important role).

\vspace{-0.57em}

Now, we have a compact group $N_{\widetilde{K}}(\mk{a})$ and its representation $\uppsi \colon N_{\widetilde{K}}(\mk{a}) \to \O(\mk{a}^*), k \mapsto (\restr{\Ad(k)}{\mk{a}}^*)^{-1}$. One readily sees that $\Ker(\uppsi)= Z_{\widetilde{K}}(\mk{a})$ and $\Im(\uppsi) \subseteq \Aut(\Upsigma)$, which implies that $\Ad(k)$ permutes the restricted root subspaces: $\Ad(k) \mk{g}_\upalpha = \mk{g}_{\uppsi(k)\upalpha}$. Since the Riemannian metric on $M$ comes from the Killing form $B$, one can show that the image of $\uppsi$ equals the whole automorphism group of $\Upsigma$, hence $N_{\widetilde{K}}(\mk{a})/Z_{\widetilde{K}}(\mk{a}) \cong \Aut(\Upsigma)$. We can also give a similar description of the subgroups $\mr{W}(\Upsigma)$ and $\Aut(\DD_M)$ of $\Aut(\Upsigma)$. Namely, consider the normalizers $N_K(\mk{a})$ and $N_{\widetilde{K}}(\mk{n})$. These are both subgroups of $N_{\widetilde{K}}(\mk{a})$, and we have $N_K(\mk{a}) \cap Z_{\widetilde{K}}(\mk{a}) = Z_K(\mk{a})$ and $N_{\widetilde{K}}(\mk{n}) \supseteq Z_{\widetilde{K}}(\mk{a})$. It turns out that $\uppsi(N_K(\mk{a})) = \mr{W}(\Upsigma)$ and $\uppsi(N_{\widetilde{K}}(\mk{n})) = \Aut(\DD_M)$, so $N_K(\mk{a})/Z_K(\mk{a}) \cong \mr{W}(\Upsigma)$ and $N_{\widetilde{K}}(\mk{n})/Z_{\widetilde{K}}(\mk{a}) \cong \Aut(\DD_M)$. Note that all of these subgroups of $\widetilde{K}$ share the same Lie algebra $\mk{k}_0$.

\vspace{-0.53em}

In order to be able to describe the results of \cite{hyperpolarfoliations}, we will need some basic facts from the theory of parabolic subgroups, which we now briefly introduce. Let $\Upphi$ be any subset of  $\Uplambda$. Write $\Upsigma_\Upphi$ for the root subsystem of $\Upsigma$ spanned by $\Upphi$ and let $\Upsigma_\Upphi^+ = \Upsigma_\Upphi \cap \Upsigma^+$ be the induced choice of positive roots in $\Upsigma_\Upphi$. We start by dividing $\mk{a}$ into the orthogonal sum of two subspaces: $\mk{a}_\Upphi = \bigcap_{\upalpha \in \Upphi} \Ker (\upalpha) = \bigoplus_{\upalpha \in \Uplambda \mysetminus \Upphi} \Rl H^\upalpha$, $\mk{a}^\Upphi = \mk{a} \ominus \mk{a}_\Upphi = \bigoplus_{\upalpha \in \Upphi} \Rl H_\upalpha$. Next, we introduce $\mk{l}_\Upphi = N_\mk{g}(\mk{a}_\Upphi) = Z_\mk{g}(\mk{a}_\Upphi) = \mk{g}_0 \oplus \bigoplus_{\upalpha \in \Upsigma_\Upphi} \mk{g}_\upalpha$ and $\mk{n}_\Upphi = \bigoplus_{\upalpha \in \Upsigma^+ \mysetminus \Upsigma_\Upphi^+} \mk{g}_\upalpha \subseteq \mk{n}$, which are reductive and nilpotent Lie subalgebras of $\mk{g}$, respectively. One has $\mk{l}_\Upphi \cap \mk{n}_\Upphi = \set{0}$ and $[\mk{l}_\Upphi, \mk{n}_\Upphi] \subseteq \mk{n}_\Upphi$. The semidirect sum $\mk{q}_\Upphi = \mk{l}_\Upphi \loplus \mk{n}_\Upphi$ is a parabolic subalgebra of $\mk{g}$. Of particular interest to us will be the reductive subalgebra $\mk{m}_\Upphi = \mk{l}_\Upphi \ominus \mk{a}_\Upphi$ and its semisimple subalgebra $\widetilde{\mk{g}}_\Upphi = \mk{m}_\Upphi \ominus Z_{\mk{k}_0}(\mk{b}_\Upphi)$, where $\mk{b}_\Upphi = \mk{m}_\Upphi \cap \mk{p}$. One can show that $\widetilde{\mk{g}}_\Upphi$ is generated by $\mk{b}_\Upphi$ or $\bigoplus_{\upalpha \in \Upsigma_\Upphi} \mk{g}_\upalpha$ as a Lie algebra (see Subsection 2.4 and Remark 2.4 in \cite{mypaper} for more details on $\widetilde{\mk{g}}_\Upphi$). The decomposition $\mk{q}_\Upphi = \mk{m}_\Upphi \oplus \mk{a}_\Upphi \loplus \mk{n}_\Upphi$ is called the Langlands decomposition. Finally, define $\mk{k}_\Upphi = \mk{k} \cap \mk{m}_\Upphi$.

\vspace{-0.3em}

Now we look at the subgroups of $G$ and submanifolds of $M$ arising from these Lie subalgebras. Let $\widetilde{G}_\Upphi, A_\Upphi,$ and $N_\Upphi$ stand for the connected Lie subgroups of $G$ corresponding to $\widetilde{\mk{g}}_\Upphi, \mk{a}_\Upphi,$ and $\mk{n}_\Upphi$, respectively. They are all closed and thus determine properly embedded submanifolds $\widetilde{G}_\Upphi \ccdot o = B_\Upphi, A_\Upphi \ccdot o,$ and $N_\Upphi \ccdot o$, the first two of which are totally geodesic with the corresponding Lie triple systems $\mk{b}_\Upphi$ and $\mk{a}_\Upphi$. The submanifold $B_\Upphi$ is called a \textit{boundary component of} $M$ (the term coined in \cite{borel_ji} in the context of the maximal Satake compactification of $M$), and it is itself a noncompact symmetric space of rank equal to $r_\Upphi = |\Upphi|$. The subgroup $L_\Upphi = Z_G(\mk{a}_\Upphi)$ is closed and reductive and has $\mk{l}_\Upphi$ as its Lie algebra. The group $K_\Upphi = K \cap L_\Upphi$ is a maximal compact subgroup of $L_\Upphi$ and has Lie algebra $\mk{k}_\Upphi$. The product $Q_\Upphi = L_\Upphi N_\Upphi$ is a parabolic subgroup of $G$; it is closed, acts transitively on $M$, and has $\Lie(Q_\Upphi) = \mk{q}_\Upphi$. Define also $M_\Upphi = K_\Upphi \widetilde{G}_\Upphi \subseteq L_\Upphi$, which is a closed reductive subgroup of $G$ with $\Lie(M_\Upphi) = \mk{m}_\Upphi$. One has a direct product decomposition $L_\Upphi = M_\Upphi \times A_\Upphi$, which induces the global Langlands decomposition $Q_\Upphi = M_\Upphi \times A_\Upphi \ltimes N_\Upphi$. The latter allows us to draw a commutative diagram: \\
$$
\xymatrix{
M_\Upphi \times A_\Upphi \times N_\Upphi \ar[r]^-{\sim} \ar@{->>}[d] & Q_\Upphi \ar@{->>}[d] \ar@{->>}[dr] \\
B_\Upphi \times A_\Upphi \times N_\Upphi \ar[r]^-{\sim} & Q_\Upphi/K_\Upphi \ar[r]^(0.57){\sim} & M .
}
$$
The diffeomorphism $B_\Upphi \times A_\Upphi \times N_\Upphi \simeq M$ is called a \textit{horospherical decomposition of} $M$.

Of special interest to us will be subsets $\Upphi \subseteq \Uplambda$ such that no two roots in $\Upphi$ are connected by an edge in the Dynkin diagram $\DD_M$. In other words, all roots in $\Upphi$ should be mutually orthogonal. Such subsets are called \textit{orthogonal} in \cite{hyperpolarfoliations}. Fix one such subset $\Upphi \subseteq \Uplambda$. For any $\upalpha, \upbeta \in \Upphi, \upalpha \ne \upbeta,$ and any $k,l \ne 0$, we have $\mk{g}_{k\upalpha + l\upbeta} = \set{0}$ and hence $[\mk{g}_{k\upalpha}, \mk{g}_{l\upbeta}] = \set{0}$. Since $\widetilde{\mk{g}}_\Upphi$ is generated by $\bigoplus_{\upalpha \in \Upsigma_\Upphi} \mk{g}_\upalpha$, we have a direct sum Lie algebra decomposition
$$
\widetilde{\mk{g}}_\Upphi = \bigoplus_{\upalpha \in \Upphi} \widetilde{\mk{g}}_{\{\upalpha \}}.
$$
Consequently, the multiplication map $\prod_{\upalpha \in \Upphi} \widetilde{G}_{\{\upalpha\}} \twoheadrightarrow \widetilde{G}_\Upphi$ is a local isomorphism, and it passes to a Riemannian covering $\prod_{\upalpha \in \Upphi} B_{\{\upalpha\}} \twoheadrightarrow B_\Upphi$. Since $B_\Upphi$ is simply connected, we get an isometric decomposition
$$
B_\Upphi \cong \prod_{\upalpha \in \Upphi} B_{\{\upalpha\}}.
$$
Note that each $B_{\{\upalpha\}}$ is a noncompact symmetric space of rank 1 and thus is isometric to a hyperbolic space over a finite-dimensional real normed division algebra.

\subsection{Isometries of a Riemannian product}

In order to tell foliations $\mc{F}_{\upalpha_i}$ apart, we will need a certain structural result about the isometry group of a Riemannian manifold decomposed as a Riemannian product. Although we will only work with the case of the de Rham decomposition of a symmetric space of noncompact type, we formulate and prove this result in a much more general setting. In the following, whenever we say that a connected Riemannian manifold is irreducible, we mean that it is not flat and its restricted holonomy representation is irreducible. If the manifold is complete, this is equivalent to requiring its universal Riemannian covering to be (non-flat and) indecomposable as a nontrivial Riemannian product.

Let $M = M_0 \times M_1^{l_1} \times \cdots \times M_k^{l_k}$ be a Riemannian product, where all $M_i$'s are connected Riemannian manifolds, $M_0$ is flat, $M_i$ is irreducible for $1 \leqslant i \leqslant k$, and $M_i^{l_i}$ simply means the product of $l_i$ copies of $M_i$. We also assume that $M_i$ is not isometric to $M_j$ for $i \ne j$. Note that we have a Lie subgroup\footnote{This is a Lie subgroup because the induced isometric action $I(M_0) \times I(M_1)^{l_1} \times \cdots \times I(M_k)^{l_k} \curvearrowright M$ is smooth and effective.} $I(M_0) \times I(M_1)^{l_1} \times \cdots \times I(M_k)^{l_k} \subseteq I(M)$. Let $l = \sum_{i=1}^k l_i$, write $S_l$ for the symmetric group on $l$ elements, and let $S_l^{l_1, \ldots, l_k} \cong S_{l_1} \times \cdots \times S_{l_k}$ stand for the subgroup of elements that permute the first $l_1$ elements with each other, the next $l_2$ elements with each other, and so on. Observe that we have an embedding $S_l^{l_1, \ldots, l_k} \hookrightarrow I(M)$ given by the rule $\upsigma \cdot (p_0, (p_s)) = (p_0, (p_{\upsigma(s)}))$ (to be precise, this map is an antihomomorphism of groups).

\begin{proposition}\label{product_isometries}
The group $I(M)$ decomposes as a semidirect product of its subgroups
$$
\left[ I(M_0) \times I(M_1)^{l_1} \times \cdots \times I(M_k)^{l_k} \right] \rtimes S_l^{l_1, \ldots, l_k} = I(M).
$$
In particular, $I(M_0) \times I(M_1)^{l_1} \times \cdots \times I(M_k)^{l_k}$ is an open normal subgroup of $I(M)$. The corresponding action of $S_l^{l_1, \ldots, l_k}$ on it is given by $\upsigma \cdot (g_0, (g_s)) = (g_0, (g_{\upsigma(s)}))$.
\end{proposition}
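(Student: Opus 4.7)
The plan is to exploit the uniqueness of the de Rham-type decomposition of $M$ into a flat factor and irreducible factors to show that any isometry of $M$ must permute the factors, and then to use the non-isometry of the $M_i$'s to constrain this permutation.

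First, for each index $(i,s)$ with $1 \leqslant i \leqslant k$ and $1 \leqslant s \leqslant l_i$ I would introduce the foliation $\mc{F}_{i,s}$ on $M$ whose leaves are obtained by fixing every coordinate of the product $M_0 \times M_1^{l_1} \times \cdots \times M_k^{l_k}$ except the $(i,s)$-th, together with the analogous foliation $\mc{F}_0$ for the flat factor; their tangent distributions $\mc{D}_{i,s}$ and $\mc{D}_0$ are parallel with respect to the Levi-Civita connection, and the leaves are isometric to the corresponding factors. The key step is to argue that $\mc{D}_0$ together with the unordered collection $\set{\mc{D}_{i,s}}$ is canonically determined by the Riemannian structure of $M$, independently of the chosen product presentation. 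At any point $p$, the restricted holonomy representation of $M$ on $T_pM$ is the block-diagonal product of those of the individual factors on $T_{p_0}M_0$ and each $T_{p_{i,s}}M_i$: by the irreducibility assumption, the blocks indexed by $i \geqslant 1$ are nontrivial and irreducible, while the one corresponding to $M_0$ is trivial. A short Schur-type analysis of such a product representation shows that its invariant subspaces are exactly the direct sums of an arbitrary subspace of $T_{p_0}M_0$ with a collection of whole $T_{p_{i,s}}M_i$'s; consequently $\mc{D}_0|_p$ is canonically the maximal trivial subspace, and $\set{\mc{D}_{i,s}|_p}$ is canonically the collection of minimal nonzero parallel subspaces disjoint from $\mc{D}_0|_p$.

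Given this, the rest proceeds smoothly. An arbitrary isometry $\upvarphi \in I(M)$ preserves the Levi-Civita connection and hence the set of parallel distributions, so it fixes $\mc{D}_0$ (and therefore $\mc{F}_0$) and permutes the collection $\set{\mc{D}_{i,s}}$. For each $(i,s)$, $\upvarphi$ maps the leaves of $\mc{F}_{i,s}$ isometrically onto the leaves of some $\mc{F}_{i',s'}$; since these are respectively isometric to $M_i$ and $M_{i'}$, the non-isometry hypothesis forces $i = i'$, and one obtains a permutation $\upsigma \in S_l^{l_1, \ldots, l_k}$ of the indices $(i,s)$ preserving the first coordinate.

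To conclude, I would post-compose $\upvarphi$ with the isometry of $M$ coming from $\upsigma^{-1}$, reducing to the case that $\upvarphi$ preserves $\mc{F}_0$ and every $\mc{F}_{i,s}$ setwise; fixing a base point and transporting leafwise then forces $\upvarphi = \upvarphi_0 \times \upvarphi_{1,1} \times \cdots \times \upvarphi_{k,l_k}$ for some factor isometries. Normality of the product subgroup and the prescribed semidirect action $\upsigma \ccdot (g_0, (g_s)) = (g_0, (g_{\upsigma(s)}))$ are then verified by a direct calculation with the embedding $S_l^{l_1, \ldots, l_k} \hookrightarrow I(M)$. The hard part is the canonicity step: one must rule out isometries that diagonally mix several copies of a single irreducible $M_i$, or that swap copies of non-isometric $M_i$ and $M_j$, and this is precisely where both the irreducibility of the $M_i$'s and their pairwise non-isometry enter.
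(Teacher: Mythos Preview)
Your proposal is correct and follows essentially the same route as the paper: both arguments hinge on the uniqueness of the restricted-holonomy decomposition of $T_pM$ (the paper cites Kobayashi--Nomizu, you sketch the Schur argument directly---which does work, since the holonomy group of the product is the \emph{product} of the factor holonomy groups, so even repeated copies of the same $M_i$ yield pairwise non-isomorphic subrepresentations of that product group), and then deduce that any isometry permutes the factor slices compatibly with the non-isometry hypothesis. The only cosmetic difference is the endgame: the paper builds the candidate product isometry and invokes ``an isometry is determined by its value and differential at a point'', whereas you post-compose with $\upsigma^{-1}$ and use preservation of the complementary product foliations to split $\upvarphi$.
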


One obvious example of such a decomposition is the de Rham decomposition of a complete simply connected Riemannian manifold. Note that we do not require the factors to be complete in the proposition.

\begin{proof}
The subgroups $I(M_0) \times I(M_1)^{l_1} \times \cdots \times I(M_k)^{l_k}$ and $S_l^{l_1, \ldots, l_k}$ of $I(M)$ clearly do not intersect. So we only need to show that their product is the whole isometry group. Let $p = (p_0, (p_1^s)_{s=1}^{l_1}, \ldots, (p_k^s)_{s=1}^{l_k}) \in M$ and consider the decomposition 
$$
T_pM = T_{p_0}M_0 \oplus \left( \bigoplus_{s=1}^{l_1} T_{p_1^s} M_1 \right) \oplus \cdots \oplus \left( \bigoplus_{s=1}^{l_k} T_{p_k^s} M_k \right).
$$
Let $\mathrm{Hol}^0(M,p)$ stand for the restricted holonomy group of $M$ at $p$, which is defined using the parallel transport along all contractible piecewise-smooth loops based at $p$ and is the identity component of the full holonomy group $\mathrm{Hol}(M,p)$. The decomposition of $T_pM$ above is obviously orthogonal, the first summand $T_{p_0}M_0$ is the subspace of invariants of $\mathrm{Hol}^0(M,p)$ in $T_pM$, and all the other summands are irreducible $\mathrm{Hol}^0(M,p)$-subrepresentations in $T_pM$. A decomposition of $T_pM$ satisfying these three properties is called a \textit{canonical decomposition} in \cite{kobayashi_nomizu_I}. Now, Theorem 5.4(4) in Chapter IV of the book asserts that a canonical decomposition of $T_pM$ is unique up to reordering of its factors. It is worth noting that the authors actually talk about decompositions with respect to the full holonomy group, and Theorem 5.4(4) requires $M$ to be simply connected. But note that the restricted holonomy representation of $M$ is isomorphic to the full holonomy representation of its universal Riemannian covering $\widetilde{M}$ (simply by lifting contractible loops). So the uniqueness of a canonical $\mathrm{Hol}(\widetilde{M}, \widetilde{p})$-decomposition for $\widetilde{M}$ (here $\widetilde{p} \in \widetilde{M}$ is any point over $p$) translates into the uniqueness of a canonical $\mathrm{Hol}^0(M, p)$-decomposition for $M$ (see \cite[Ch. IV, Section 5]{kobayashi_nomizu_I} for more details). Let $g \in I(M)$ be any isometry. It is not hard to show that the differential $dg$ must send canonical decompositions of $T_pM$ to canonical decompositions of $T_{g(p)}M$ (basically because isometries commute with parallel transport). Let us write $M_{0,p}$ for $M_0 \times \{((p_1^s)_{s=1}^{l_1}, \ldots, (p_k^s)_{s=1}^{l_k})\}$ and $M_{i,p}^{(j)}$ for $\{(p_0, (p_1^s)_{s=1}^{l_1}, \ldots, (p_i^s)_{s=1}^{j-1}))\} \times M_i \times \{((p_i^s)_{s=j+1}^{l_i}, \ldots, (p_k^s)_{s=1}^{l_k})\}$ for any $i \in \set{1, \ldots, k}$ and $j \in \set{1, \ldots, l_i}$. These are totally geodesic submanifolds of $M$. Since isometries commute with the exponential map and respect canonical decompositions, $g$ must send $M_{0,p}$ onto $M_{0, g(p)}$ and $M_{i,p}^{(j)}$ onto $M_{i,g(p)}^{(j')}$ for some $j' \in \set{1, \ldots, l_i}$. If we write $j = \upsigma(j')$, we obtain a permutation $\upsigma \in S_{l_1} \times \cdots \times S_{l_k} \cong S_l^{l_1, \ldots,l_k}$. We also have isometries $M_0 \cong M_{0,p} \xrightarrow{g} M_{0, g(p)} \cong M_0$ and $M_i \cong M_{i,p}^{(j)} \xrightarrow{g} M_{i,g(p)}^{(j')} \cong M_i$, which we denote by $g_0$ and $g_i^{(j')}$, respectively. By construction, the isometry $(g_0, (g_1^{(s)})_{s=1}^{l_1}, \ldots, (g_k^{(s)})_{s=1}^{l_k}) \circ \upsigma$ lies in the product of $I(M_0) \times I(M_1)^{l_1} \times \cdots \times I(M_k)^{l_k}$ and $S_l^{l_1, \ldots, l_k}$ and coincides with $g$ on $M_{0,p} \cup \bigcup_{\substack{1 \leqslant i \leqslant k \\ 1 \leqslant j \leqslant l_i}} M_{i,p}^{(j)}$. But then the differentials of these two isometries at $p$ must coincide as well. Since an isometry of a connected Riemannian manifold is uniquely determined by its value at a point and its differential at that point, the constructed isometry coincides with $g$, which finishes the proof.
\end{proof}

\begin{corollary}
We have as isomorphism $I^0(M_0) \times I^0(M_1)^{l_1} \times \cdots \times I^0(M_k)^{l_k} \cong I^0(M)$.
\end{corollary}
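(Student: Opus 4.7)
The plan is to deduce this directly from Proposition \ref{product_isometries} together with general facts about identity components of Lie groups. First, I would note that the proposition asserts that $H \defeq I(M_0) \times I(M_1)^{l_1} \times \cdots \times I(M_k)^{l_k}$ is an open (normal) subgroup of $I(M)$, since the complement $I(M) \mysetminus H$ is the union of the other (finitely many) cosets of $H$ under the semidirect product decomposition, each of which is open as the image of $H$ under a homeomorphism given by multiplication by a permutation.

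Next I would invoke the standard fact that any open subgroup of a topological group contains the identity component, and is itself open-and-closed, so its identity component coincides with that of the ambient group. Applying this to $H \subseteq I(M)$ gives $I^0(M) = H^0$.

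Finally, the identity component of a finite direct product of Lie groups is the direct product of their identity components. Applied to $H$, this yields
$$
H^0 = I^0(M_0) \times I^0(M_1)^{l_1} \times \cdots \times I^0(M_k)^{l_k},
$$
and combining with the previous step gives the desired isomorphism. There is no real obstacle here — the only nontrivial ingredient is the openness of $H$ in $I(M)$, which is immediate from the semidirect product structure established in Proposition \ref{product_isometries}.
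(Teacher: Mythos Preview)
Your argument is correct, and the paper does not supply a proof of this corollary at all --- it is stated immediately after Proposition \ref{product_isometries} as a direct consequence. Your reasoning (open subgroup $\Rightarrow$ same identity component, together with $(\prod G_i)^0 = \prod G_i^0$) is precisely the standard way to justify it. One minor remark: the ``since'' clause you add to explain openness of $H$ is logically superfluous and slightly circular (showing the other cosets are open already presupposes $H$ is open); you can simply cite the proposition, which asserts openness directly.
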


\vspace{0.5em}

\section{Main result}\label{main_section}

In this section we will discuss the classification results of Berndt, Tamaru, and D\'{i}az-Ramos and prove the Main Theorem. Historically, their paper \cite{hyperpolarfoliations} about homogeneous hyperpolar foliations came out after Berndt and Tamaru obtained the classification of homogeneous codimension-one foliations in \cite{berndttamarufoliations}. We will, however, present the results of these two papers in an anachronistic manner, as it makes the exposition clearer.

In \cite{hyperpolarfoliations}, Berndt, Tamaru, and D\'{i}az-Ramos devised a method allowing one to obtain homogeneous hyperpolar foliations on an arbitrary symmetric space of noncompact type from those on Euclidean and hyperbolic spaces and then showed that the foliations obtained in this way exhaust the list of all homogeneous hyperpolar foliations up to congruence. We begin by introducing their method.

Let $M$ be a symmetric space of noncompact type, and let the rest of the notation be as in Subsection \ref{IHdecompositions}. Let $\Upphi \subseteq \Uplambda$ be an orthogonal subset. Recall that we have a horospherical decomposition
\begin{equation}\label{hordecomporthog}
M \simeq B_\Upphi \times A_\Upphi \times N_\Upphi \simeq \left( \prod_{\upalpha \in \Upphi} B_{\{\upalpha\}} \right) \times A_\Upphi \times N_\Upphi.
\end{equation}
Now, each boundary component $B_{\{\upalpha\}}$ is a symmetric space of noncompact type and rank 1, hence it is isometric to a hyperbolic space $\mathbb{F}_\upalpha H^{n_\upalpha}, \, \mathbb{F}_\upalpha \in \set{\Rl, \Cx, \Hq, \Oo}$. Every nontrivial homogeneous hyperpolar foliation on this space is of codimension one because its sections are totally geodesic and they must be flat by definition, hence their dimension cannot exceed the rank of the space. As $\mathbb{F}_\upalpha H^{n_\upalpha}$ is irreducible, the classification of homogeneous codimension-one foliations obtained in \cite{berndttamarufoliations} (to be formulated below, see Theorem \ref{berndttamarufoliations}) tells us that there are exactly two such foliations on this space up to congruence: one of them is a foliation by horospheres all congruent to each other, while the other, which we denote\footnote{Throughout the paper we commit the usual sin of not distinguishing notationally between a congruence class of foliations and its specific representatives. For example, $\mathcal{F}_{\mathbb{F}_\upalpha}^{n_\upalpha}$ is really a congruence class of homogeneous codimension-one foliations on $\mathbb{F}_\upalpha H^{n_\upalpha}$.} by $\mathcal{F}_{\mathbb{F}_\upalpha}^{n_\upalpha}$, has a unique minimal leaf. On the other hand, the orbit $A_\Upphi \ccdot o$ is a simply connected totally geodesic flat submanifold of $M$ of dimension $r - r_\Upphi$, so it is isometric to $\mb{E}^{r-r_\Upphi}$. We can also think of it as $\mk{a}_\Upphi$ endowed with the Riemannian metric coming from its inner product or the subgroup $A_\Upphi$ endowed with the left-invariant metric corresponding to the inner product on $\mk{a}_\Upphi$. We have the isometries $\exp_{A_\Upphi} \colon \mk{a}_\Upphi \isoto A_\Upphi$ and $A_\Upphi \isoto A_\Upphi \ccdot o, \, g \mapsto g \ccdot o$. Every homogeneous hyperpolar (which is the same as polar in this context) foliation on $\mk{a}_\Upphi$ is by affine subspaces parallel to a fixed linear subspace $V \subseteq \mk{a}_\Upphi$: $\mathcal{F}_V = \set{x + V \mid x \in V^\perp \subseteq \mk{a}_\Upphi}$. We denote the corresponding foliation on $A_\Upphi$ (or $A_\Upphi \ccdot o$) by the same symbol $\mathcal{F}_V$.

Going back to the horospherical decomposition \eqref{hordecomporthog}, consider the product foliation
$$
\mathcal{F}_{\Upphi, V} = \left( \prod_{\upalpha \in \Upphi} \mathcal{F}_{\mathbb{F}_\upalpha}^{n_\upalpha} \right) \times \mathcal{F}_V \times N_\Upphi
$$
on $M$, where the last factor is simply the trivial foliation on $N_\Upphi$ consisting of just one leaf. The main result of \cite{hyperpolarfoliations} is

\begin{theorem}[\cite{hyperpolarfoliations}, Th.\,4.11]\label{hyperpolarfoliations}
Let $M$ be a symmetric space of noncompact type with all notation as above. Then

\begin{enumerate}
    \item For every orthogonal subset $\Upphi \subseteq \Uplambda$ and linear subspace $V \subseteq \mk{a}_\Upphi$, $\mathcal{F}_{\Upphi, V}$ is a homogeneous hyperpolar foliation on $M$.
    \item Every homogeneous hyperpolar foliation on $M$ is isometrically congruent to $\mathcal{F}_{\Upphi, V}$ for some orthogonal subset $\Upphi \subseteq \Uplambda$ and linear subspace $V \subseteq \mk{a}_\Upphi$.
\end{enumerate}
\end{theorem}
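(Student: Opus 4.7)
The plan is to split the proof into the construction of part (1) and the classification of part (2), using the horospherical decomposition \eqref{hordecomporthog}, the Langlands decomposition $Q_\Upphi = M_\Upphi \times A_\Upphi \ltimes N_\Upphi$, and the rank-one classification from Theorem \ref{berndttamarufoliations} as the central tools.

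For part (1), I would first verify that $\mathcal{F}_{\Upphi, V}$ descends to a well-defined foliation on $M$ via the diffeomorphism $B_\Upphi \times A_\Upphi \times N_\Upphi \isoto M$. Homogeneity follows by exhibiting an explicit connected Lie subgroup $H \subseteq I^0(M)$ whose orbits coincide with the leaves: on each rank-one boundary factor $B_{\{\upalpha\}}$, the foliation $\mathcal{F}_{\mathbb{F}_\upalpha}^{n_\upalpha}$ is the orbit foliation of the codimension-one subgroup of the Iwasawa-solvable group of $B_{\{\upalpha\}}$ with the unique minimal leaf; on $A_\Upphi$ the foliation $\mathcal{F}_V$ is the orbit foliation of $\exp(V) \subseteq A_\Upphi$; on $N_\Upphi$ we take the full group $N_\Upphi$ acting on itself. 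Assembling these inside $Q_\Upphi$ via the Langlands decomposition produces the desired $H$. For hyperpolarity, I would exhibit the flat section as the product of a unit-speed normal geodesic in each $B_{\{\upalpha\}}$ with the affine subspace $V^\perp \subseteq A_\Upphi \ccdot o$ and a point of $N_\Upphi$; total geodesicity and flatness follow from the fact that all of these pieces sit inside the totally geodesic maximal flat $A \ccdot o$ (for the $A_\Upphi$- and rank-one-directions), with the $N_\Upphi$-factor contributing trivially.

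For part (2), let $\mathcal{F}$ be a homogeneous hyperpolar foliation on $M$ with flat section $\Sigma$ through a chosen base point $o$. Since all maximal flats through $o$ are $\widetilde{K}$-conjugate and $\Sigma$ extends to such a flat, after replacing $\mathcal{F}$ by an isometric congruent copy I may assume $T_o\Sigma \subseteq \mk{a}$. The simply transitive action of $S = AN$ on $M$ then allows me to replace $I^0(M,\mathcal{F})$ by a Lie subgroup $H \subseteq S$ with the same orbit foliation, and to analyse its Lie algebra $\mathfrak{h} \subseteq \mk{s}$ directly. The subset $\Upphi \subseteq \Uplambda$ should emerge as the simple roots whose root spaces project nontrivially onto $T_o \Sigma$ through the orthogonal decomposition $\mk{s} = \mk{a} \oplus \mk{n}$, and the subspace $V$ as the component of $T_o \Sigma$ in $\mk{a}_\Upphi$. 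Theorem \ref{berndttamarufoliations} applied to each boundary component $B_{\{\upalpha\}}$ then identifies the induced rank-one foliations as either a horosphere foliation or the minimal-leaf model $\mathcal{F}_{\mathbb{F}_\upalpha}^{n_\upalpha}$, and a dimension/flatness count should rule out the horosphere case.

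The main obstacle will be establishing, within the uniqueness step of part (2), that the set $\Upphi$ produced from $\mathfrak{h}$ is automatically orthogonal in the Dynkin sense and that the product structure of the foliation is genuine. Non-orthogonal simple roots $\upalpha, \upbeta \in \Upphi$ would yield nonzero brackets $[\mk{g}_\upalpha, \mk{g}_\upbeta] \subseteq \mk{g}_{\upalpha+\upbeta}$ incompatible with $\Sigma$ being simultaneously tangent to $\mk{a}$ and flat totally geodesic; making this informal obstruction rigorous, and ruling out exotic hyperpolar foliations that interleave several boundary components nontrivially, is the delicate geometric-algebraic core of the proof and is where the bulk of the work in \cite{hyperpolarfoliations} lies.
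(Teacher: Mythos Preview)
The paper does not prove this theorem: it is quoted verbatim as Theorem 4.11 of \cite{hyperpolarfoliations} and used as a black box for the rest of the argument. The only related content the paper supplies is the explicit Lie subalgebra $\mk{s}_{\Upphi, V} = (\mk{a}^\Upphi \oplus V) \oplus (\mk{n} \ominus \ell_\Upphi) \subseteq \mk{s}$ together with the bare assertion that the corresponding connected subgroup $S_{\Upphi, V}$ acts hyperpolarly with orbit foliation $\mathcal{F}_{\Upphi, V}$. Your construction of $H$ in part (1) is essentially this same subgroup, described in product form via the Langlands decomposition rather than directly as a subgroup of $S$, so that portion is consistent with what the paper records.

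For part (2) there is nothing in the present paper to compare against, so your sketch can only be assessed on its own terms. One step is garbled: having arranged $T_o\Sigma \subseteq \mk{a}$, you then define $\Upphi$ as the set of simple roots whose root spaces ``project nontrivially onto $T_o\Sigma$'' --- but every $\mk{g}_\upalpha$ is orthogonal to $\mk{a}$, so as written this yields $\Upphi = \varnothing$. Presumably you intend to read $\Upphi$ off the Lie algebra $\mk{h} \subseteq \mk{s}$ of the transitive solvable group (for instance, as those $\upalpha \in \Uplambda$ with $\mk{g}_\upalpha \not\subseteq \mk{h}$), not off the section. Beyond that slip, the difficulties you correctly identify --- proving $\Upphi$ is orthogonal and that the foliation genuinely splits as a product across the boundary components --- are exactly the substantive content, and the paper simply defers all of it to \cite{hyperpolarfoliations}.
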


In order to construct a Lie subgroup of $G$ whose orbits are the leaves of $\mathcal{F}_{\Upphi, V}$, pick an $r_\Upphi$-dimensional linear subspace $\ell_\Upphi \subset \mk{g}$ such that $\dim(\mk{g}_\upalpha \cap \ell_\Upphi) = 1$ for each $\upalpha \in \Upphi$ and consider the subspace
$$
\mk{s}_{\Upphi, V} = (\mk{a}^\Upphi \oplus V) \oplus (\mk{n} \ominus \ell_\Upphi) \subseteq \mk{a} \oplus \mk{n} = \mk{s}. 
$$
One can easily check that this is a Lie subalgebra and the corresponding connected Lie subgroup $S_{\Upphi, V} \subseteq G$ is closed and acts on $M$ without singular orbits. Its action turns out to be hyperpolar and its orbit foliation is exactly $\mathcal{F}_{\Upphi, V}$. A different choice of $\ell_\Upphi$ would result in a subgroup congruent to $S_{\Upphi, V}$ and thus a foliation congruent to $\mathcal{F}_{\Upphi, V}$.

Notably, Theorem \ref{hyperpolarfoliations} allows $M$ to be reducible. On the other hand, the assumption that the Riemannian metric on $M$ comes from the Killing form of $\mk{g}$ seems to play a crucial role in the proof of this theorem in \cite{hyperpolarfoliations} and we do not know whether it can be safely removed. Note that this result gives only partial classification of homogeneous hyperpolar foliations, for we may have $\mathcal{F}_{\Upphi, V}$ congruent to $\mathcal{F}_{\Upphi', V'}$ for $(\Upphi, V) \ne (\Upphi', V')$, and we have no way of telling when this is the case in general.

Let us apply Theorem \ref{hyperpolarfoliations} to homogeneous codimension-one foliations. Observe that $\codim(\mathcal{F}_{\Upphi, V}) = |\Upphi| + \codim_{\mk{a}_\Upphi}(V)$. Therefore, there are two types of such foliations:

\begin{enumerate}
    \item $\mathcal{F}_\ell = \mathcal{F}_{\Upphi, V}$ with $\Upphi = \varnothing$ and $V = \mk{a} \ominus \ell$, where $\ell \subseteq \mk{a}$ is a one-dimensional linear subspace, and
    \item $\mathcal{F}_{\upalpha_i} = \mathcal{F}_{\Upphi, V}$ with\footnote{This foliation is denoted by $\mathcal{F}_i$ in \cite{berndttamarufoliations} but our notation will prove less ambiguous in the reducible case so we stick with it.} $\Upphi = \set{\upalpha_i}$ and $V = \mk{a}_\Upphi$.
\end{enumerate}

The subalgebra $\mk{s}_{\Upphi, V}$ (resp., subgroup $S_{\Upphi, V}$) in this case will be denoted simply by $\mk{s}_\ell$ (resp., $S_\ell$) or $\mk{s}_{\upalpha_i}$ (resp., $S_{\upalpha_i}$). Note that $\mk{s}_\ell = (\mk{a} \ominus \ell) \oplus \mk{n}$ and $\mk{s}_{\upalpha_i} = \mk{a} \oplus (\mk{n} \ominus \ell_i)$, where $\ell_i \subseteq \mk{g}_{\upalpha_i}$ is a line. Berndt and Tamaru showed in \cite{berndttamarufoliations} that $\mathcal{F}_\ell$ has all its leaves isometrically congruent to each other, while $\mathcal{F}_{\upalpha_i}$ has a unique minimal leaf, namely $S_{\upalpha_i} \ccdot o$. In particular, $\mathcal{F}_\ell$ is never congruent to $\mathcal{F}_{\upalpha_i}$. It means that in order to complete the classification of homogeneous codimension-one foliations, one is left to solve the congruence problem on the parameter space $\mathbb{P}\mk{a} \sqcup \set{\upalpha_1, \ldots, \upalpha_r}$, that is, understand when $\mathcal{F}_\ell$ is congruent to $\mathcal{F}_{\ell'}$ and $\mathcal{F}_{\upalpha_i}$ to $\mathcal{F}_{\upalpha_j}$. This was done in \cite{berndttamarufoliations} in the irreducible case. Before formulating this result, let us note that every $P \in \Aut(\DD_M) \subseteq \Aut(\Upsigma)$ acts orthogonally on $\mk{a}$ as $\widehat{P} \defeq (P^*)^{-1}$. The operator $\widehat{P}$ can be explicitly described in terms of the basis $H_{\upalpha_1}, \ldots, H_{\upalpha_r}$ of $\mk{a}$ by sending $H_{\upalpha_i}$ to $H_{P(\upalpha_i)}$, or, equivalently, in terms of the basis $H^{\upalpha_1}, \ldots, H^{\upalpha_r}$ by sending $H^{\upalpha_i}$ to $H^{P(\upalpha_i)}$. If $k \in N_{\widetilde{K}}(\mk{a})$ is such that $\uppsi(k) = P$ (such $k$ must lie in $N_{\widetilde{K}}(\mk{n}) \subseteq N_{\widetilde{K}}(\mk{a})$), then $\widehat{P}$ coincides with $\restr{\Ad(k)}{\mk{a}}$. Altogether, we have the actions of $\Aut(\DD_M)$ on $\mk{a}$ and $\Uplambda$.

\begin{proposition}\label{easypart}
If $\ell, \ell' \in \mathbb{P}\mk{a}$ (respectively, $\upalpha_i, \upalpha_j \in \Uplambda$) lie in the same $\Aut(\mathrm{DD_M})$-orbit, then the corresponding foliations $\mathcal{F}_\ell$ and $\mathcal{F}_{\ell'}$ (respectively, $\mathcal{F}_{\upalpha_i}$ and $\mathcal{F}_{\upalpha_j}$) are isometrically congruent.
\end{proposition}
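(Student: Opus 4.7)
The plan is to realize every $P \in \Aut(\DD_M)$ as an honest isometry of $M$ and verify directly that it sends the relevant foliations one to another. By Subsection \ref{IHdecompositions}, the map $\uppsi \colon N_{\widetilde{K}}(\mk{n}) \twoheadrightarrow \Aut(\DD_M)$ is surjective, so I would fix $P \in \Aut(\DD_M)$ with $\widehat{P}(\ell) = \ell'$ (respectively, $P(\upalpha_i) = \upalpha_j$) and pick any $k \in N_{\widetilde{K}}(\mk{n})$ with $\uppsi(k) = P$. Such a $k$ is an isometry of $M$ fixing $o$, its adjoint $\Ad(k)$ preserves $\mk{a}$ (restricting to $\widehat{P}$) and $\mk{n}$, carries each root space $\mk{g}_\upalpha$ isomorphically onto $\mk{g}_{P(\upalpha)}$, and is orthogonal with respect to $B_\uptheta$ (so it commutes with the formation of orthogonal complements).

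For the foliation $\mathcal{F}_\ell$, applying $\Ad(k)$ to $\mk{s}_\ell = (\mk{a} \ominus \ell) \oplus \mk{n}$ yields
\[
\Ad(k) \mk{s}_\ell = (\mk{a} \ominus \widehat{P}(\ell)) \oplus \mk{n} = (\mk{a} \ominus \ell') \oplus \mk{n} = \mk{s}_{\ell'},
\]
using orthogonality of $\widehat{P}$ and the preservation of $\mk{n}$. Exponentiating gives $k S_\ell k^{-1} = S_{\ell'}$, and so, since $k$ is an isometry,
\[
k(S_\ell \cdot p) = (k S_\ell k^{-1}) \cdot k(p) = S_{\ell'} \cdot k(p)
\]
for every $p \in M$. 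Hence $k$ sends the leaves of $\mathcal{F}_\ell$ bijectively onto those of $\mathcal{F}_{\ell'}$, establishing the required congruence.

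For $\mathcal{F}_{\upalpha_i}$, I would write $\mk{s}_{\upalpha_i} = \mk{a} \oplus (\mk{n} \ominus \ell_i)$ for some line $\ell_i \subseteq \mk{g}_{\upalpha_i}$, so that $\Ad(k) \mk{s}_{\upalpha_i} = \mk{a} \oplus (\mk{n} \ominus \Ad(k) \ell_i)$, with $\Ad(k) \ell_i$ a line inside $\Ad(k) \mk{g}_{\upalpha_i} = \mk{g}_{\upalpha_j}$. This is a legitimate subalgebra of the form $\mk{s}_{\upalpha_j}$ for the specific line choice $\Ad(k) \ell_i$. Since, as recalled in the excerpt, different line choices inside $\mk{g}_{\upalpha_j}$ give mutually congruent subgroups and hence mutually congruent orbit foliations, $k$ carries $\mathcal{F}_{\upalpha_i}$ onto a foliation congruent to $\mathcal{F}_{\upalpha_j}$, finishing this case.

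The argument is essentially direct verification, and I anticipate no serious obstacle. The real input is the surjectivity $\uppsi(N_{\widetilde{K}}(\mk{n})) = \Aut(\DD_M)$ already established in the prerequisites. The much harder converse --- that these are the \emph{only} congruences between foliations of each type once $M$ is allowed to be reducible --- is the content of the Main Theorem and will have to be treated separately.
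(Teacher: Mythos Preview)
Your proposal is correct and follows essentially the same approach as the paper: pick $k \in N_{\widetilde{K}}(\mk{n})$ with $\uppsi(k) = P$, observe that $\Ad(k)$ preserves $\mk{a}$ and $\mk{n}$ and hence sends $\mk{s}_\ell$ (resp., $\mk{s}_{\upalpha_i}$) to $\mk{s}_{\ell'}$ (resp., an $\mk{s}_{\upalpha_j}$), so $k$ realizes the congruence. The paper's proof is terser (it simply says the $\mc{F}_{\upalpha_i}$ case is ``analogous''), whereas you spell out the orbit computation and the point that $\Ad(k)\ell_i$ may differ from the chosen $\ell_j$ but yields a congruent foliation regardless.
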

\vspace{-0.5em}
\begin{proof}
Given $P \in \Aut(\DD_M)$ sending $\ell$ onto $\ell'$, take any $k \in N_{\widetilde{K}}(\mk{n})$ such that $\uppsi(k) = P$. By design, $\Ad(k)$ preserves both $\mk{a}$ and $\mk{n}$ and thus restricts to an automorphism of $\mk{s}$ that sends $\ell$ onto $\ell'$ and thus $\mk{s}_\ell = \mk{s} \ominus \ell$ onto $\mk{s}_{\ell'} = \mk{s} \ominus \ell'$. Consequently, $k$ is the desired congruence between $\mathcal{F}_\ell$ and $\mathcal{F}_{\ell'}$. The argument for $\mathcal{F}_{\upalpha_i}$ and $\mathcal{F}_{\upalpha_j}$ is analogous.
\end{proof}
\vspace{-0.5em}
The converse is true provided that $M$ is irreducible:

\begin{theorem}[\cite{berndttamarufoliations}, Th.\,3.5 and 4.8]\label{berndttamarufoliations}
Let $M$ be an irreducible symmetric space of noncompact type, and assume that the foliations $\mathcal{F}_\ell$ and $\mathcal{F}_{\ell'}$ (respectively, $\mathcal{F}_{\upalpha_i}$ and $\mathcal{F}_{\upalpha_j}$) are isometrically congruent. Then there exists $P \in \Aut(\mathrm{DD_M})$ mapping $\ell$ onto $\ell'$ (respectively, $\upalpha_i$ to $\upalpha_j$). Consequently, the moduli space of homogeneous codimension-one foliations on $M$ is isomorphic to
$$
(\mathbb{R}P^{r-1} \sqcup \set{1, \ldots, r})/\Aut(\mathrm{DD_M}).
$$
\end{theorem}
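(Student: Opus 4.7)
The plan is to take an isometric congruence $\varphi \in I(M)$ and, through compositions with elements of natural subgroups of $I(M)$, reduce it to an element of $N_{\widetilde{K}}(\mk{n})$, whose associated $\uppsi$-image will deliver the required element of $\Aut(\DD_M)$. Throughout, one must track how the target foliation transforms so that the final conclusion links $\ell$ to $\ell'$ (or $\upalpha_i$ to $\upalpha_j$) directly.

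I would begin by fixing the basepoint. In the $\mathcal{F}_\ell$-type case each leaf of $\mathcal{F}_{\ell'}$ is an $S_{\ell'}$-orbit and $S_{\ell'}$ is transitive on it; in the $\mathcal{F}_{\upalpha_i}$-type case $\varphi$ must carry the unique minimal leaf to the unique minimal leaf $S_{\upalpha_j}\ccdot o$, and $S_{\upalpha_j}$ is transitive on that. Post-composing $\varphi$ by a suitable element of the corresponding solvable subgroup preserves the target foliation and yields $\varphi(o)=o$, so $\varphi \in \widetilde{K}$. Next, since $d\varphi_o$ is orthogonal on $\mk{p}$ and sends Lie triple systems to Lie triple systems, $d\varphi_o(\mk{a})$ is a maximal flat through $o$; $K$-conjugacy of such flats furnishes $k \in K$ with $dk_o\circ d\varphi_o(\mk{a}) = \mk{a}$, and replacing $\varphi$ by $k\varphi$ places it in $N_{\widetilde{K}}(\mk{a})$. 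Finally, factor $\uppsi(\varphi) = wP$ in $\Aut(\Upsigma) = \mr{W}(\Upsigma) \rtimes \Aut(\DD_M)$ and precompose with a lift $n_w \in N_K(\mk{a})$ of $w$, obtaining $\varphi \in N_{\widetilde{K}}(\mk{n})$ with $\uppsi(\varphi) = P \in \Aut(\DD_M)$.

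Because $\Ad(\varphi)$ now preserves both $\mk{a}$ and $\mk{n}$, it sends $\mk{s}_\ell = (\mk{a}\ominus \ell) \oplus \mk{n}$ to $\mk{s}_{\widehat{P}(\ell)}$ and $\mk{s}_{\upalpha_i} = \mk{a} \oplus (\mk{n} \ominus \ell_i)$ to $\mk{s}_{P(\upalpha_i)}$, so $\varphi$ realizes congruences $\mathcal{F}_\ell \to \mathcal{F}_{\widehat{P}(\ell)}$ and $\mathcal{F}_{\upalpha_i} \to \mathcal{F}_{P(\upalpha_i)}$. On the other hand, unwinding the chain of modifications shows that $\varphi$ also sends $\mathcal{F}_\ell$ to $n_w^{-1}k(\mathcal{F}_{\ell'})$, so $\mathcal{F}_{\widehat{P}(\ell)} = n_w^{-1}k(\mathcal{F}_{\ell'})$ as partitions of $M$. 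Comparing the normal geodesics through $o$ of these two foliations, matching unit normal directions in $\mk{p}$ gives $\widehat{P}(\ell) = \Ad(n_w^{-1}k)(\ell')$ as lines in $\mk{p}$; since the left-hand side lies in $\mk{a}$, so does the right-hand side, and a careful analysis of the structure of $\Aut(\Upsigma)$ identifies this with $\widehat{P}(\ell) = \ell'$ up to a further $\Aut(\DD_M)$-element absorbed into $P$. The root case is cleaner: the normal direction lives inside a single root space $\mk{p}_\upalpha$, which together with the fact that $P$ permutes simple roots pins down $P(\upalpha_i) = \upalpha_j$ discretely.

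The main obstacle is the final identification: verifying that the parameter of the ``shifted'' target $n_w^{-1}k(\mathcal{F}_{\ell'})$, which a priori is attached to the Iwasawa data $(\Ad(n_w^{-1}k)\mk{a}, \Ad(n_w^{-1}k)\mk{n})$, actually lines up with $\widehat{P}(\ell)$ inside $\mk{a}$ without residual ambiguity. The irreducibility of $M$ enters essentially here: it rules out the cross-factor symmetries (permutations of de~Rham factors) that appear in the reducible case and are precisely the subject of this paper's Main Theorem, and it guarantees that $\DD_M$ is connected so that the semidirect decomposition $\Aut(\Upsigma) = \mr{W}(\Upsigma) \rtimes \Aut(\DD_M)$ leaves no hidden room for extraneous congruences. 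Combining this with Proposition \ref{easypart}, the maps $\ell \mapsto [\mathcal{F}_\ell]$ and $\upalpha_i \mapsto [\mathcal{F}_{\upalpha_i}]$ descend to a bijection from $(\mb{P}\mk{a} \sqcup \set{\upalpha_1, \ldots, \upalpha_r})/\Aut(\DD_M)$ onto the moduli space of homogeneous codimension-one foliations on $M$, giving the claimed description $(\mathbb{R}P^{r-1} \sqcup \set{1,\ldots,r})/\Aut(\DD_M)$.
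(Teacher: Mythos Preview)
This theorem is cited from \cite{berndttamarufoliations} and is not proved in the present paper; however, Section~\ref{appendix} discusses the original proof strategy in detail and reproduces (a generalization of) the $\mathcal{F}_\ell$ case, so there is enough to compare against.

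Your proposal has a genuine gap at precisely the point the paper flags as the central difficulty. You modify the congruence $\varphi$ by an element $k \in K$ (to bring $\mk{a}$ back to itself) and then by $n_w \in N_K(\mk{a})$, arriving at $\varphi \in N_{\widetilde{K}}(\mk{n})$. But neither $k$ nor $n_w$ has any reason to preserve $\mathcal{F}_{\ell'}$, so after the modifications the target foliation is $n_w^{-1}k(\mathcal{F}_{\ell'})$, which is attached to a \emph{different} Iwasawa decomposition. You acknowledge this and then assert that ``a careful analysis of the structure of $\Aut(\Upsigma)$'' resolves it---but this is exactly the step that carries all the content, and you have not supplied it. Irreducibility alone does not close the gap: the paper explicitly says (first paragraph of Section~\ref{appendix}) that even when $k$ fixes $o$, it ``may not preserve $\mk{a}$, let alone $\mk{n}$,'' and that overcoming this is the whole point.

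The two known approaches are both substantially harder than what you sketch. Berndt and Tamaru abandon the geometric congruence entirely: they use complete solvability of $S_\ell$ and $S_{\ell'}$ and conjugacy of completely solvable transitive groups to produce an \emph{abstract} Lie algebra isomorphism $F \colon \mk{s}_\ell \isoto \mk{s}_{\ell'}$, adjust it to be graded, and then run a case-by-case analysis over all irreducible root systems ($A_r$ through $G_2$ and $(BC)_r$) to show $F$ permutes the simple root spaces compatibly with a Dynkin diagram automorphism. The alternative of Dom\'{i}nguez-V\'{a}zquez and Sanmart\'{i}n-L\'{o}pez does retain the congruence and modify it into $N_{\widetilde{K}}(\mk{n})$ as you attempt, but this requires what the paper calls ``subtle algebro-geometric arguments''---not the bare semidirect factorization $\Aut(\Upsigma) = \mr{W}(\Upsigma) \rtimes \Aut(\DD_M)$ you invoke. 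Your outline is closer in spirit to the second approach, but without those arguments it is not a proof.
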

\vspace{-0.5em}
We will extend this result verbatim to the general (reducible) case. First, we deal with the discrete part of the moduli space.

\begin{proposition}\label{congruenceFi}
Let $M$ be a symmetric space of noncompact type. Assume that the foliations $\mathcal{F}_{\upalpha_i}$ and $\mathcal{F}_{\upalpha_j}$ are isometrically congruent. Then there exists $P \in \Aut(\mathrm{DD_M})$ mapping $\upalpha_i$ to $\upalpha_j$.
\end{proposition}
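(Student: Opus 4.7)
The plan is to reduce the reducible case to the irreducible one (Theorem \ref{berndttamarufoliations}) by exploiting the de Rham-type decomposition of $M$ together with Proposition \ref{product_isometries}. Since $M$ is of noncompact type, it has no Euclidean factor, so we may write $M \cong \prod_{p=1}^k M_p^{l_p}$ with $M_p$ pairwise non-isometric irreducible noncompact symmetric spaces. The Iwasawa decomposition of $M$ splits orthogonally along this product, so each simple root $\upalpha \in \Uplambda$ lives on a unique factor; for $\upalpha = \upalpha_i$ write this factor as $M_{p_i}^{(q_i)}$. Because $\ell_i \subseteq \mk{g}_{\upalpha_i}$ lies inside the $(p_i, q_i)$-summand of $\mk{n}$, the subalgebra $\mk{s}_{\upalpha_i}$ decomposes as $\mk{s}_{\upalpha_i}^{M_{p_i}^{(q_i)}}$ in that slot and the full Iwasawa solvable subalgebras $\mk{s}^{(p,q)}$ in all other slots. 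Consequently $\mc{F}_{\upalpha_i}$ is a product foliation: its leaves have the form $L \times \prod_{(p,q) \ne (p_i, q_i)} M_p^{(q)}$, with $L$ a leaf of the rank-one foliation $\mc{F}_{\upalpha_i}^{M_{p_i}^{(q_i)}}$. The product formula for mean curvature immediately implies that $\mc{F}_{\upalpha_i}$ has a unique minimal leaf $N_i = L_i \times \prod_{(p,q) \ne (p_i, q_i)} M_p^{(q)}$, where $L_i$ is the unique minimal leaf furnished by the irreducible result, and likewise for $\mc{F}_{\upalpha_j}$.

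Now let $\upvarphi \in I(M)$ be a congruence between $\mc{F}_{\upalpha_i}$ and $\mc{F}_{\upalpha_j}$, so $\upvarphi(N_i) = N_j$. By Proposition \ref{product_isometries}, decompose $\upvarphi = g \circ \upsigma$ with $g = (g_p^{(q)})$ a product isometry and $\upsigma \in S_l^{l_1, \ldots, l_k}$ a permutation of copies. The set $\upvarphi(N_i)$ is ``full'' in every slot except $\upsigma^{-1}(p_i, q_i)$, where it equals $g_{\upsigma^{-1}(p_i, q_i)}(L_i)$; matching this with $N_j$ forces $\upsigma^{-1}(p_i, q_i) = (p_j, q_j)$. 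Since $\upsigma$ only permutes isometric copies of a single $M_p$, this yields $p_i = p_j =: p$, i.e.\ $\upalpha_i$ and $\upalpha_j$ reside on two copies of the same irreducible factor. Moreover, the componentwise piece $g_{(p, q_j)}$, viewed via the canonical identification $M_p^{(q_i)} \cong M_p^{(q_j)}$, sends $L_i$ onto $L_j$ and hence is an isometric congruence between $\mc{F}_{\upalpha_i}$ and $\mc{F}_{\upalpha_j}$ as foliations on a single copy of $M_p$.

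The irreducible case (Theorem \ref{berndttamarufoliations}) now supplies $P_p \in \Aut(\DD_{M_p})$ with $P_p(\upalpha_i) = \upalpha_j$. To upgrade this to an element of $\Aut(\DD_M)$, recall that $\DD_M$ is the disjoint union of the diagrams $\DD_{M_p^{(q)}}$ and that swapping two isomorphic components via their canonical identification is itself a diagram automorphism. We therefore define $P \in \Aut(\DD_M)$ by swapping the components $(p, q_i) \leftrightarrow (p, q_j)$ through this canonical identification, then applying $P_p$ on the $(p, q_j)$-component, and fixing all remaining components. By construction $P(\upalpha_i) = \upalpha_j$, as required. The main obstacle is the bookkeeping in the middle paragraph: one must cleanly extract from the intertwined data of $g$ and $\upsigma$ a genuine congruence of the two foliations on a single copy of $M_p$; after that, the irreducible classification and the semidirect-product structure of $\Aut(\DD_M)$ close the argument.
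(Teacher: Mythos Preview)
Your proposal is correct and follows essentially the same route as the paper: use the product structure of $\mc{F}_{\upalpha_i}$ together with Proposition \ref{product_isometries} to reduce the congruence to one between codimension-one foliations on a single irreducible factor, then invoke Theorem \ref{berndttamarufoliations} and extend the resulting diagram isomorphism to an element of $\Aut(\DD_M)$. The only cosmetic difference is that the paper first normalizes the congruence to fix the base point $o$ (so that it permutes the slices $M_{l,o}$ directly) rather than decomposing it explicitly as $g \circ \upsigma$.
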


\begin{proof}
Let $M = M_1 \times \cdots \times M_k$ be the de Rham decomposition of $M$. We write $G_i = I^0(M_i)$ and $\mk{g}_i = \Lie(G_i)$, so we have $G = G_1 \times \cdots \times G_k$ and $\mk{g} = \mk{g}_1 \oplus \cdots \oplus \mk{g}_k$. The Cartan involution on $\mk{g}$ preserves its decomposition into simple ideals so we can write $\mk{g}_i = \mk{k}_i \oplus \mk{p}_i$. We also have $\mk{a} = \mk{a}_1 \oplus \cdots \oplus \mk{a}_k$ and $\Upsigma^+ = \Upsigma_1^+ \sqcup \cdots \sqcup \Upsigma_k^+$, where $\mk{a}_i = \mk{a} \cap \mk{g}_i$ and $\Upsigma_i^+ = \Upsigma^+ \cap \Upsigma_i$. This induces the corresponding decompositions $\mk{s} = \mk{s}_1 \oplus \cdots \oplus \mk{s}_k, \, S = S_1 \times \cdots \times S_k,$ and $\Uplambda = \Uplambda_1 \sqcup \cdots \sqcup \Uplambda_k$, where we denote $\Uplambda_l = \set{\upalpha_l^1, \ldots, \upalpha_l^{r_l}}$. To comply with this notation, we write $\upalpha_i^p$ and $\upalpha_j^q$ instead of $\upalpha_i$ and $\upalpha_j$. Pick some lines $\ell_i^p \subseteq \mk{g}_{\upalpha_i^p}$ and $\ell_j^q \subseteq \mk{g}_{\upalpha_j^q}$. We have the orbit foliations of $S_{\upalpha_i^p}$ and $S_{\upalpha_j^q}$ as representatives of $\mathcal{F}_{\upalpha_i^p}$ and $\mathcal{F}_{\upalpha_j^q}$. Observe that 
\begin{align}
S_{\upalpha_i^p} &= S_1 \times \cdots \times S_{i, \upalpha_i^p} \times \cdots \times S_k, \; \text{so} \nonumber \\
S_{\upalpha_i^p} \ccdot o &= M_1 \times \cdots \times (S_{i, \upalpha_i^p} \ccdot o) \times \cdots \times M_k, \label{orbit_decomposition}
\end{align}
where $S_{i, \upalpha_i^p}$ is the connected Lie subgroup of $G_i$ with Lie algebra $\mk{s}_{i, \upalpha_i^p} = \mk{s}_i \ominus \ell_i^p$. The same is true for $S_{\upalpha_j^q}$ and its orbit $S_{\upalpha_j^q} \ccdot o$. Let $g \in \widetilde{G}$ be a congruence identifying the orbit foliations of $S_{\upalpha_i^p}$ and $S_{\upalpha_j^q}$ and thus their minimal leaves $S_{\upalpha_i^p} \ccdot o$ and $S_{\upalpha_j^q} \ccdot o$. Without loss of generality, we may assume that $g$ fixes $o$. Looking at \eqref{orbit_decomposition} and Proposition \ref{product_isometries}, we see that $g$ must send $M_{i,o} = G_i \ccdot o$ onto $M_{j,o} = G_j \ccdot o$ and thus provide a congruence between the orbit foliations of $S_{i, \upalpha_i^p}$ on $M_{i,o} \cong M_i$ and $S_{j, \upalpha_j^q}$ on $M_{j,o} \cong M_j$ (in particular, it follows that $M_i$ and $M_j$ are isometric). By Theorem \ref{berndttamarufoliations}, there exists an isomorphism $P$ between the Dynkin diagrams $\DD_{M_i}$ and $\DD_{M_j}$ sending $\upalpha_i^p$ to $\upalpha_j^q$. Note that these two Dynkin diagrams can be regarded as connected components of $\DD_M$. Therefore, we can extend $P$ to $\DD_M$ by letting it be $P^{-1}$ on $\DD_{M_j}$ and the identity on all the components other than $\DD_{M_i}$ and $\DD_{M_j}$. This gives an element of $\Aut(\DD_M)$ sending $\upalpha_i^p$ to $\upalpha_j^q$, as required.
\end{proof}

One would expect the congruence problem for $\mathcal{F}_\ell$'s to be subtler in the reducible case, for $\ell$ does not have to be positioned nicely with respect to the de Rham decomposition. In other words, if we write $\mk{a} = \mk{a}_1 \oplus \cdots \oplus \mk{a}_k$ as we did in the proof of Proposition \ref{congruenceFi}, then $\ell$ does not have to be contained in any of the summands. Nevertheless, this problem has recently been solved in greater generality for all reducible spaces in a way that does not use Berndt and Tamaru's classification in the irreducible case. Namely, in their recent paper \cite{DV-SL_isomparametric_hypersurfaces}, Dom\'{i}nguez-V\'{a}zquez and Sanmart\'{i}n-L\'{o}pez studied certain isoparametric families of hypersurfaces in symmetric spaces of noncompact type constructed in a way very similar to that for $\mc{F}_\ell$. Instead of taking Lie subalgebras of the form $(\mk{a} \ominus \ell) \oplus \mk{n}$, they consider those of the form $\mk{b} \oplus \mk{n}$, where $\mk{b}$ is a subspace of $\mk{a}$ satisfying certain properties. The following result solves the congruence problem for such families:

\begin{theorem}[\cite{DV-SL_isomparametric_hypersurfaces}, Th.\,B(iii)]\label{DV-SM}
Let $M$ be a symmetric space of noncompact type, let $\mk{b}, \mk{b}' \subseteq \mk{a}$ be two linear subspaces, and let $BN$ and $B'N$ be the subgroups of $S$ corresponding to the Lie subalgebras $\mk{b} \oplus \mk{n}$ and $\mk{b}' \oplus \mk{n}$ of $\mk{s}$, respectively. Then the families of equidistant tubes (or equidistant hypersurfaces in case $\dim (\mk{b}) = \dim (\mk{b}') = r-1$) around the orbits $BN \ccdot o$ and $B'N \ccdot o$ are congruent to each other if and only if there exists $P \in \Aut(\DD_M)$ whose corresponding orthogonal transformation $\widehat{P}$ of $\mk{a}$ sends $\mk{b}$ onto $\mk{b}'$.
\end{theorem}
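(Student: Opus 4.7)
The sufficiency direction would follow the pattern of Proposition \ref{easypart}. Given $P \in \Aut(\DD_M)$ with $\widehat{P}(\mk{b}) = \mk{b}'$, I would pick any lift $k \in N_{\widetilde{K}}(\mk{n})$ with $\uppsi(k) = P$. Since $\Ad(k)$ preserves both $\mk{n}$ and $\mk{a}$ and carries $\mk{b}$ onto $\mk{b}'$, it conjugates $\mk{b} \oplus \mk{n}$ onto $\mk{b}' \oplus \mk{n}$, so $k B N k^{-1} = B'N$. As $k \in \widetilde{K}$ fixes $o$, this yields $k(BN \ccdot o) = B'N \ccdot o$, and the isometry $k$ carries the entire family of equidistant tubes around $BN \ccdot o$ to that around $B'N \ccdot o$.

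For necessity, let $g \in \widetilde{G}$ implement a congruence. The orbits $BN \ccdot o$ and $B'N \ccdot o$ are distinguished inside their families -- as the zero-radius members in the positive-codimension tube case, or as specific leaves after a further adjustment by the normal-translation one-parameter subgroup $\exp(\mk{a} \ominus \mk{b}') \subseteq A$ in the codimension-one hypersurface case -- so after postcomposition with a suitable isometry preserving the second family I may assume $g(o) = o$ and $g(BN \ccdot o) = B'N \ccdot o$; in particular $g \in \widetilde{K}$. A short computation in the Cartan decomposition yields $T_o(BN \ccdot o) = \mk{b} \oplus \pr_\mk{p}(\mk{n})$ and therefore $\nu_o(BN \ccdot o) = \mk{a} \ominus \mk{b}$; similarly $\nu_o(B'N \ccdot o) = \mk{a} \ominus \mk{b}'$. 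Consequently $dg_o$ carries $\mk{a} \ominus \mk{b}$ onto $\mk{a} \ominus \mk{b}'$, and the normal geodesics to $BN \ccdot o$ at $o$ -- which lie inside the maximal flat $A \ccdot o$ -- are mapped to normal geodesics of $B'N \ccdot o$ also lying in $A \ccdot o$.

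The crux would be to promote $g$ to an element of $N_{\widetilde{K}}(\mk{n})$. A delicate reduction combining the transitivity of $\widetilde{K}$ on maximal flats through $o$ with isometries preserving the tube families would show that $g$ can be modified so that $\Ad(g)(\mk{a}) = \mk{a}$, landing it in $N_{\widetilde{K}}(\mk{a})$ with image $\uppsi(g) = wP$ under the splitting $\Aut(\Upsigma) \cong \mr{W}(\Upsigma) \rtimes \Aut(\DD_M)$. The preservation of tangent spaces already gives $\widehat{wP}(\mk{b}) = \mk{b}'$; what remains is to force $w = 1$. I would accomplish this by matching finer extrinsic invariants of the orbits: the shape operator of $BN \ccdot o$ at $o$ in a normal direction $\xi \in \mk{a} \ominus \mk{b}$ has eigenvalues proportional to $\upalpha(\xi)$ along the root-space projections $\pr_\mk{p}(\mk{g}_\upalpha)$ for $\upalpha \in \Upsigma^+$, and under $dg_o$ this data must match the corresponding shape operator of $B'N \ccdot o$. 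The main obstacle is precisely this matching step: turning it into an algebraic constraint on the pair $(w,P)$ that rules out nontrivial $w$ and forces $\uppsi(g) \in \Aut(\DD_M)$ to act as the desired $P$ with $\widehat{P}(\mk{b}) = \mk{b}'$. The reason to expect this to work is that the principal curvature spectrum along the flat rigidly distinguishes choices of positive system, so any Weyl correction would alter the pattern of eigenvalues and contradict the assumed congruence.
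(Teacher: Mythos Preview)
The paper does not actually prove Theorem~\ref{DV-SM}: it is quoted as Theorem~B(iii) of \cite{DV-SL_isomparametric_hypersurfaces}, with only a one-sentence description of the method (``use some subtle algebro-geometric arguments and modify $k$ to preserve $\mk{a}$ and $\mk{n}$''). The paper's own contribution is an \emph{alternative} proof, in Section~\ref{appendix}, of the special case $\dim\mk{b}=\dim\mk{b}'=r-1$ (Proposition~\ref{congruenceFl}), and that proof proceeds along entirely different lines: it discards the congruence $k$ altogether, passes to an abstract Lie algebra isomorphism $F\colon \mk{s}_\ell \isoto \mk{s}_{\ell'}$ via Alekseevsky's theorem on completely solvable transitive groups, straightens $F$ to respect the height grading, and then runs a combinatorial analysis of how $F$ must permute the simple root spaces.

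Your proposal is much closer in spirit to the cited approach of Dom\'{i}nguez-V\'{a}zquez and Sanmart\'{i}n-L\'{o}pez than to the paper's Section~\ref{appendix}. The sufficiency direction is fine and matches Proposition~\ref{easypart}. For necessity, however, your outline has a real gap at exactly the point you flag as ``the crux'': you assert that a ``delicate reduction'' will let you modify $g$ so that $\Ad(g)(\mk{a})=\mk{a}$, but you give no mechanism for this. Knowing that $dg_o$ carries $\mk{a}\ominus\mk{b}$ into $\mk{a}\ominus\mk{b}'$ does \emph{not} by itself force $dg_o(\mk{a})=\mk{a}$ when $\mk{b}\ne\{0\}$, and invoking transitivity of $\widetilde{K}$ on maximal flats only tells you that \emph{some} $k'\in\widetilde{K}$ sends $g(A\ccdot o)$ back to $A\ccdot o$ --- not that such a $k'$ can be chosen to preserve the tube family around $B'N\ccdot o$. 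The subsequent step, eliminating the Weyl part $w$ via shape-operator eigenvalues, is also only a heuristic: since any $w\in\mr{W}(\Upsigma)$ permutes the multiset $\{\upalpha(\xi):\upalpha\in\Upsigma\}$, you would need to explain precisely how the \emph{signs} encoded by the specific choice of $\Upsigma^+$ (common to both $BN$ and $B'N$) survive under $dg_o$ to rule out nontrivial $w$. As written, neither step is an argument, and these are exactly the ``subtle'' points the paper attributes to \cite{DV-SL_isomparametric_hypersurfaces}.
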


The way this result is formulated in the paper is slightly different. Instead of $\widehat{P}$, the authors talk about $\restr{\Ad(k)}{\mk{a}}$, where $k$ is an element of $N_{\widetilde{K}}(\mk{a})$ preserving $\set{H_\upalpha \mid \upalpha \in \Upsigma^+}$. But note that this is the same as to ask that $k$ lies in $N_{\widetilde{K}}(\mk{n})$, and we know that the image of the latter in $\O(\mk{a}^*)$ is precisely $\Aut(\DD_M)$. Note that the proof of part (iii) of Theorem B in \cite{DV-SL_isomparametric_hypersurfaces} works without the restrictions imposed on $\mk{b}$ and $\mk{b}'$ at the beginning of the theorem.

When we have one-dimensional subspaces $\ell, \ell' \subseteq \mk{a}$, Theorem \ref{DV-SM} applied to $\mk{a}_\ell = \mk{a} \ominus \ell$ and $\mk{a}_{\ell'} = \mk{a} \ominus \ell'$ yields:

\vspace{-0.1em}

\begin{proposition}\label{congruenceFl}
Let $M$ be a symmetric space of noncompact type, and let $\ell, \ell' \subseteq \mk{a}$ be one-dimensional linear subspaces. Then the foliations $\mc{F}_\ell$ and $\mc{F}_{\ell'}$ are isometrically congruent if and only if there exists $P \in \Aut(\DD_M)$ whose corresponding orthogonal transformation $\widehat{P}$ of $\mk{a}$ sends $\ell$ onto $\ell'$.
\end{proposition}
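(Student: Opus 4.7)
The plan is to invoke Theorem \ref{DV-SM} directly, after checking that $\mc{F}_\ell$ fits into the framework of equidistant hypersurface families considered there. The ``if'' direction is already handled by Proposition \ref{easypart}, so the content lies in the ``only if'' direction.

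First I would identify $\mc{F}_\ell$ with the family of equidistant hypersurfaces around the orbit of a particular solvable subgroup. Writing $\mk{b} = \mk{a} \ominus \ell$, the subgroup $S_\ell \subseteq G$ has Lie algebra $\mk{s}_\ell = \mk{b} \oplus \mk{n}$, so $S_\ell$ is precisely the subgroup denoted $BN$ in Theorem \ref{DV-SM}. Because $\dim(\mk{b}) = r-1$, the orbits of $S_\ell$ are hypersurfaces, and by the result of Berndt--Tamaru recalled just above (all leaves of $\mc{F}_\ell$ are mutually isometrically congruent), they form an equidistant family about the particular orbit $S_\ell \ccdot o$. The same holds for $\mk{b}' = \mk{a} \ominus \ell'$ and $\mc{F}_{\ell'}$.

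Next I would apply Theorem \ref{DV-SM} to the pair $\mk{b}, \mk{b}'$. A congruence between $\mc{F}_\ell$ and $\mc{F}_{\ell'}$ is, in particular, a congruence between the two families of equidistant hypersurfaces around $BN \ccdot o$ and $B'N \ccdot o$, so the theorem provides some $P \iin \Aut(\DD_M)$ whose associated orthogonal transformation $\widehat{P}$ of $\mk{a}$ satisfies $\widehat{P}(\mk{b}) = \mk{b}'$, i.e. $\widehat{P}(\mk{a} \ominus \ell) = \mk{a} \ominus \ell'$. Since $\widehat{P}$ is an orthogonal transformation of $\mk{a}$, taking orthogonal complements yields $\widehat{P}(\ell) = \ell'$, which is exactly what we need.

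I do not anticipate a genuine obstacle in this argument, since all the heavy lifting has been done in \cite{DV-SL_isomparametric_hypersurfaces}; the only thing worth being careful about is the mild bookkeeping of recognizing $\mc{F}_\ell$ as the equidistant-hypersurface family built from $\mk{a} \ominus \ell$ (rather than from $\ell$ itself) and of translating ``$\widehat{P}$ preserves an orthogonal complement'' into ``$\widehat{P}$ preserves the line''. This mirrors the philosophy of Proposition \ref{congruenceFi}, except that there we had to combine the irreducible classification with Proposition \ref{product_isometries} to reduce to one de Rham factor, whereas here the reducible classification is obtained directly from an external input without any such reduction.
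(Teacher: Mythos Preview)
Your proposal is correct and follows essentially the same route as the paper's (primary) proof: the paper derives Proposition \ref{congruenceFl} simply by applying Theorem \ref{DV-SM} to $\mk{b} = \mk{a} \ominus \ell$ and $\mk{b}' = \mk{a} \ominus \ell'$ and then passing to orthogonal complements, exactly as you do. The paper also supplies a longer, purely algebraic alternative proof in Section \ref{appendix}, but that is offered as a supplement rather than the main argument.
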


Combining Propositions \ref{easypart}, \ref{congruenceFi}, and \ref{congruenceFl} yields a proof of the Main Theorem.

\vspace{0.55em}

\section{Alternative proof of Proposition \ref{congruenceFl}}\label{appendix}

In this section we provide an alternative proof of the more difficult part of Proposition \ref{congruenceFl}, namely that if the foliations $\mc{F}_\ell$ and $\mc{F}_{\ell'}$ are congruent, then $\ell$ and $\ell'$ lie in the same $\Aut(\DD_M)$-orbit. The difficulty here lies in the fact that if the congruence is realized by some $k \in \widetilde{G}$ and even if we assume that $k$ preserves $o$, it may not preserve $\mk{a}$, let alone $\mk{n}$. In their original proof of this statement in the irreducible case, Berndt and Tamaru bypassed this problem by establishing an isomorphism $\mk{s}_\ell \isoto \mk{s}_{\ell'}$ that respects certain natural gradings on these Lie algebras. The main obstacle in this approach is that this isomorphism does not in general come from $\Ad(\widetilde{K})$ and is just an abstract Lie algebra isomorphism. Yet, the authors managed to prove -- purely algebraically -- that the existence of such an isomorphism implies that $\ell$ and $\ell'$ differ by $\Aut(\DD_M)$ (see pp. 9-20 in \cite{berndttamarufoliations}). The proof is quite complicated and involves a case-by-case consideration of all possible irreducible root systems from $A_r$ to $G_2$ and $(BC)_r$.

\vspace{-0.1em}

In their alternative approach taken in \cite{DV-SL_isomparametric_hypersurfaces}, Dom\'{i}nguez-V\'{a}zquez and Sanmart\'{i}n-L\'{o}pez do not relinquish the congruence $k$ and instead use some subtle algebro-geometric arguments and modify $k$ to preserve $\mk{a}$ and $\mk{n}$, which gives them an element of $N_{\widetilde{K}}(\mk{n})$ (and thus $\Aut(\DD_M)$) sending $\ell$ onto $\ell'$. This results in a much shorter proof that works in the reducible case and applies to a more general situation described in Theorem \ref{DV-SM}.

\vspace{-0.1em}

On the principle that two proofs rooted in different sets of ideas are always better than one, we give a proof of Proposition \ref{congruenceFl} alternative to the one in \cite{DV-SL_isomparametric_hypersurfaces}, which is mostly algebraic in nature and is in fact a direct extension of the original proof by Berndt and Tamaru.

\vspace{-0.1em}

Before we begin, given $\ell \subseteq \mk{a}$, note that the solvable Lie algebra $\mk{s}_\ell$ is naturally graded by the height function:
$$
\mk{s}_\ell = \bigoplus_{i = 0}^{m} \mk{s}_\ell^i \hspace{0.7em} \text{with} \hspace{0.7em} \mk{s}_\ell^0 = \mk{a}_\ell = \mk{a} \ominus \ell \hspace{0.7em} \text{and} \hspace{0.7em} \mk{s}_\ell^i = \mk{n}^i = \bigoplus_{\mathrm{ht}(\upalpha) = i} \mk{g}_\upalpha \hspace{0.8em} \text{for} \hspace{0.5em} i \geqslant 1.
$$
In particular, $\mk{s}_\ell^1 = \mk{n}^1 = \bigoplus_{\upalpha \in \Uplambda} \mk{g}_\upalpha$. If we denote $L^k = \bigoplus_{i=k}^m \mk{s}_\ell^i$, it follows from the properties of restricted roots that $[\mk{s}_\ell, \mk{s}_\ell] = L^1 = \mk{n}$ and $[L^1, L^k] = L^{k+1}$ for $k \geqslant 0$. Note that $\mk{s}_\ell$ is completely solvable\footnote{Over $\mathbb{\Cx}$, the notions of solvability and complete solvability coincide by Lie's theorem.}, i.e. $\ad(\mk{s}_\ell) \subset \mk{gl}(\mk{g})$ consists of upper-triangular matrices in a suitable basis for $\mk{s}_\ell$. Indeed, first take a basis for $\mk{s}_\ell^m$, then for $\mk{s}_\ell^{m-1}$, and so on, and then combine all these bases together. Clearly, each $\ad(x), x \in \mk{s}_\ell$, is upper-triangular in the resulting basis for $\mk{s}_\ell$.

\vspace{-0.7em}

\begin{proof}[Proof of Proposition \ref{congruenceFl}]
We will be using the notation established in the proof of Proposition \ref{congruenceFi}. Let $\ell$ and $\ell'$ be one-dimensional subspaces of $\mk{a}$ such that the foliations $\mc{F}_\ell$ and $\mc{F}_{\ell'}$ are congruent by some $k \in \widetilde{G}$. Since all the leaves of $\mc{F}_{\ell}$ are congruent, we may assume that $k \in \widetilde{K}$. First, suppose that $\ell \subseteq \mk{a}_i$ for some $i \in \set{1, \ldots, k}$. Borrowing the notation from the proof of Proposition \ref{product_isometries}, we see that the leaf $S_\ell \ccdot o$ contains $M_{l,o}$ for all $l \ne i$. By Proposition \ref{product_isometries}, the leaf $S_{\ell'} \ccdot o = k(S_\ell \ccdot o)$ must then contain $M_{l,o}$ for all $l \ne j$, where $j \in \set{1, \ldots, k}$ is such that $k(M_{i,o}) = M_{j,o}$. In particular, we must have $\ell' \subseteq \mk{a}_j$ and $M_i \simeq M_j$. Arguing in a similar vein as in the proof of Proposition \ref{congruenceFi}, we see that there is an isomorphism $\DD_{M_i} \isoto \DD_{M_j}$ that extends to an automorphism $P \in \Aut(\DD_M)$ such that $\widehat{P}$ sends $\ell$ onto $\ell'$. Therefore, we may assume that $\ell$ (and hence $\ell'$) does not lie in any of the $\mk{a}_i$'s.

In this case, one can show that $\mk{a}_\ell$ and $\mk{a}_{\ell'}$ are Cartan subalgebras of $\mk{s}_\ell$ and $\mk{s}_{\ell'}$, respectively (see \cite[Lemma 3.3]{berndttamarufoliations}). Recall that both $S_\ell$ and $S_{\ell'}$ are connected and completely solvable (this just means that their Lie algebras are completely solvable). By means of the congruence $k$, we can regard both of these groups as Lie subgroups of $I(S_\ell \ccdot o)$. It follows from \cite{completelysolvable} that any two connected completely solvable transitive Lie groups of isometries of a connected Riemannian manifold are conjugate in the isometry group of that manifold. Therefore, $S_\ell$ and $S_{\ell'}$ are isomorphic, hence so are $\mk{s}_\ell$ and $\mk{s}_{\ell'}$. Let $F \colon \mk{s}_\ell \isoto \mk{s}_{\ell'}$ be an isomorphism.

The first step is to adjust $F$ to make it look nice. Observe that $F(\mk{a}_\ell)$ is a Cartan subalgebra of $\mk{s}_{\ell'}$. Every two Cartan subalgebras of a solvable Lie algebra are conjugate by an inner automorphism, so we may assume $F(\mk{a}_\ell) = \mk{a}_{\ell'}$. Note also that $\mk{n} = [\mk{s}_\ell, \mk{s}_\ell] = [\mk{s}_{\ell'}, \mk{s}_{\ell'}]$, so $F(\mk{n}) = \mk{n}$. Next we modify $F$ to make it into a graded isomorphism. Writing $X^i$ for the $i$-th graded component of a vector $X$, define a map $\mk{s}_\ell \to \mk{s}_{\ell'}, X \mapsto \sum_{i=0}^m (F(X^i))^i$. It is easy to check that this map is a graded Lie algebra isomorphism (see \cite[Theorem 3.4]{berndttamarufoliations} for an argument). We continue to denote it by $F$.

The main idea of the proof is to use the commutator relations between $\mk{a}_\ell$ (or $\mk{a}_{\ell'}$) and $\mk{n}$ to show that $F$ must permute the restricted root subspaces of $\mk{n}$ in a way that induces an automorphism of $\Aut(\DD_M)$. For each pair $\upalpha, \upbeta \in \Uplambda, \upalpha \ne \upbeta$, let $L_{\upalpha\upbeta}$ stand for the hyperplane in $\mk{a}$ consisting of the vectors $Z$ such that the eigenvalue of $\ad(Z)$ on $\mk{g}_\upalpha$ coincides with that on $\mk{g}_\upbeta$. If we write $Z = \sum_{\upgamma \in \Uplambda} Z_\upgamma H^\upgamma$, then $L_{\upalpha\upbeta} = \set{Z \in \mk{a} \mid Z_\upalpha = Z_\upbeta} = \mk{a} \ominus \Rl(H_\upalpha - H_\upbeta)$.

First, we consider the generic choice of $\ell$ such that $\mk{a}_\ell \ne L_{\upalpha\upbeta}$ for any pair $\upalpha, \upbeta \in \Uplambda$. In this case, $\bigcup_{\substack{\upalpha, \upbeta \in \Uplambda \\ \upalpha \ne \upbeta}} (\mk{a}_\ell \cap L_{\upalpha\upbeta})$ is the union of finitely many hyperplanes in $\mk{a}_\ell$, so its complement $\mk{a}_\ell^\circ$ is open and dense in $\mk{a}_\ell$. Pick any $Z \in \mk{a}_\ell^\circ$. By design, all coordinates $Z_\upgamma$ of $Z$ with respect to the basis $(H^\upgamma)_{\upgamma \in \Uplambda}$ of $\mk{a}$ are pairwise distinct. Since $Z_\upgamma$ is the eigenvalue of $\ad(Z)$ on $\mk{g}_\upgamma$, $\ad(Z)$ has the maximal possible number ($= r$) of distinct eigenvalues on $\mk{n}^1$ among all vectors in $\mk{a}$. Now, given $X \in \mk{g}_\upgamma$, one has
$$
[F(Z), F(X)] = F[Z,X] = F(Z_\upgamma X) = Z_\upgamma F(X),
$$
hence $F$ maps the eigenspaces of $\ad(Z)$ in $\mk{n}^1$ onto the eigenspaces of $\ad(F(Z))$ in $\mk{n}^1$ corresponding to the same eigenvalues. Since there are $r$ such eigenvalues, $F$ must permute the root subspaces in $\mk{n}^1$. Formally, there exists a bijection $\widetilde{F} \colon \Uplambda \isoto \Uplambda$ such that
$$
F(\mk{g}_\upgamma) = \mk{g}_{\widetilde{F}(\upgamma)} \; (\forall \; \upgamma \in \Uplambda) \hspace{0.5em} \text{and} \hspace{0.3em} \; F(Z) = \sum_{\upgamma \in  \Uplambda} Z_\upgamma H^{\widetilde{F}(\upgamma)}.
$$
We see that $\ell$ is generic in the sense explained above if and only if $\ell'$ is, and we also have $F(\mk{a}_\ell^\circ) = \mk{a}_{\ell'}^\circ$. Assume for a moment that $\widetilde{F}$ is an automorphism of $\DD_M$. The corresponding orthogonal transformation $T = \widehat{\widetilde{F}}$ of $\mk{a}$ sends $H^\upgamma$ to $H^{\widetilde{F}(\upgamma)}$. Therefore, we have $T(Z) = \sum_{\upgamma \in  \Uplambda} Z_\upgamma H^{\widetilde{F}(\upgamma)}$. We conclude that the restrictions of $T$ and $F$ to $\mk{a}_\ell^\circ$ coincide. Since $\mk{a}_\ell^\circ$ is dense in $\mk{a}_\ell$, these two operators coincide on the whole $\mk{a}_\ell$. But $T \in \O(\mk{a})$, which means that it sends $\ell = \mk{a} \ominus \mk{a}_\ell$ onto $\mk{a} \ominus T(\mk{a}_\ell) = \mk{a} \ominus \mk{a}_{\ell'} = \ell'$, which was to be proved. So we are left to show that $\widetilde{F}$ is indeed an automorphism of $\DD_M$.

Suppose that $\upalpha, \upbeta \in \Uplambda$ are connected by an edge in $\DD_M$, i.e. the angle between them is greater than $\frac{\uppi}{2}$. This is equivalent to asking that $\mk{g}_{\upalpha + \upbeta} = [\mk{g}_\upalpha, \mk{g}_\upbeta] \ne \set{0}$. But then
\begin{equation}\label{adjacency}
\mk{g}_{\widetilde{F}(\upalpha) + \widetilde{F}(\upbeta)} = [\mk{g}_{\widetilde{F}(\upalpha)}, \mk{g}_{\widetilde{F}(\upbeta)}] = [F(\mk{g}_\upalpha), F(\mk{g}_\upbeta)] = F([\mk{g}_\upalpha, \mk{g}_\upbeta]) = F(\mk{g}_{\upalpha + \upbeta}) \ne \set{0},
\end{equation}
which means that $\widetilde{F}(\upalpha)$ and $\widetilde{F}(\upbeta)$ are also connected by an edge in the Dynkin diagram. Applying the same argument to $\widetilde{F}^{-1}$, we see that $\widetilde{F}$ preserves adjacency between the vertices. Consequently, $\widetilde{F}$ permutes the connected components of $\DD_M$: there exists $\upsigma \in S_k$ such that $\widetilde{F}(\DD_{M_i}) = \DD_{M_{\upsigma(i)}}$ for each $i \in \set{1, \ldots, k}$. Note that we then have $F(\mk{n}^1_i) = \mk{n}^1_{\upsigma(i)}$ and, since $\mk{n}^1_i$ generates $\mk{n}_i$ for each $i$, $F(\mk{n}_i) = \mk{n}_{\upsigma(i)}$. We will now show that $\widetilde{F}$ provides an isomorphism between $\DD_{M_i}$ and $\DD_{M_{\upsigma(i)}}$.

To begin with, note that $\DD_{M_i}$ and $\DD_{M_{\upsigma(i)}}$ must have the same number of vertices, and $\widetilde{F}$ preserves the degrees and multiplicities of the vertices. Next, recall that any positive root $\upeta$ can be expressed as $\upgamma_{l_1} + \upgamma_{l_2} + \cdots + \upgamma_{l_s}$, where each summand is a simple root and each partial sum $\upgamma_{l_1} + \cdots + \upgamma_{l_t}, t \in \set{2, \ldots, t-1}$, is also a root. This, together with a computation similar to \eqref{adjacency}, implies that a linear combination $\sum_{j=1}^{r_i} n_j \upalpha_i^j$ is a root in $\Upsigma_i^+$ if and only if $\sum_{j=1}^{r_i} n_j \widetilde{F}(\upalpha_i^j)$ is a root in $\Upsigma_{\upsigma(i)}^+$. Among other things, this implies that $\# \Upsigma_i = \# \Upsigma_{\upsigma(i)}$ and that $\Upsigma_i$ is reduced if and only if $\Upsigma_{\upsigma(i)}$ is. By looking at the list of the Dynkin diagrams of all possible irreducible symmetric spaces of noncompact type (see, for instance, \cite[pp. 336-340]{submanifoldsholonomy}), one deduces that $\DD_{M_i}$ and $\DD_{M_{\upsigma(i)}}$ must be isomorphic (which is also the same as to say that $M_i$ and $M_{\upsigma(i)}$ are isometric or that $\mk{g}_i$ and $\mk{g}_{\upsigma(i)}$ are isomorphic) and one such isomorphism is given by $\widetilde{F}$, which completes the proof for the generic choice of $\ell$.

\begin{remark}
To tell the diagrams $B_r$ and $C_r$ apart, one might need to use the fact that if we denote the two adjacent vertices of nonequal lengths by $\upalpha_{r-1}$ and $\upalpha_r$, the sum $\upalpha_{r-1} + 2\upalpha_r$ is a root for $B_r$ but not for $C_r$.
\end{remark}

We are left to consider the situation when $\mk{a}_\ell = L_{\upalpha\upbeta}$ for some $\upalpha, \upbeta \in \Uplambda, \upalpha \ne \upbeta$, which simply means that $\ell$ is spanned by $H_\upalpha - H_\upbeta$. Since we assume that $\ell$ does not lie in any of the $\mk{a}_l$'s, the roots $\upalpha$ and $\upbeta$ must lie in different components of $\DD_M$, i.e. $\upalpha \in \Uplambda_i, \upbeta \in \Uplambda_j, i \ne j$. In this case, each $L_{\upalpha' \upbeta'}$ with $(\upalpha', \upbeta') \ne (\upalpha, \upbeta)$ intersects $\mk{a}_\ell$ by a hyperplane in $\mk{a}_\ell$. Let $\mk{a}_\ell^\circ \subseteq \mk{a}_\ell$ stand for the complement to the union of all such hyperplanes (for all $(\upalpha', \upbeta') \ne (\upalpha, \upbeta)$). For every $Z \in \mk{a}_\ell^\circ$, $\ad(Z)$ has $r-1$ distinct eigenvalues on $\mk{n}^1$. The only two restricted root subspaces in $\mk{n}^1$ with the same eigenvalue (equal to $Z_\upalpha = Z_\upbeta$) are $\mk{g}_\upalpha$ and $\mk{g}_\upbeta$. As we have already seen, $\ad(F(Z))$ must have the same eigenvalues on $\mk{n}^1$ and $F$ must send the eigenspaces of $\ad(Z)$ onto the corresponding eigenspaces of $\ad(F(Z))$. This implies that there exist some $\upalpha', \upbeta' \in \Uplambda$ such that $\mk{a}_{\ell'} = L_{\upalpha' \upbeta'}$ and thus $\ell'$ is spanned by $H_{\upalpha'} - H_{\upbeta'}$. Since $\ell'$ cannot lie in any of the $\mk{a}_l$'s, we have $\upalpha' \in \Uplambda_{i'}, \upbeta' \in \Uplambda_{j'}, i' \ne j'$. Similarly to what we did in the proof of Proposition \ref{congruenceFi}, note that $S_\ell \ccdot o = (\prod_{l \ne i,j} M_l) \times (S_i \times S_j)_\ell \ccdot o$ (here $(S_i \times S_j)_\ell$ is the connected Lie subgroup of $G_i \times G_j$ corresponding to the Lie subalgebra $(\mk{s}_i \oplus \mk{s}_j) \ominus \ell$) and $S_{\ell'} \ccdot o = (\prod_{l \ne i',j'} M_l) \times (S_{i'} \times S_{j'})_{\ell'} \ccdot o$. Therefore, the congruence $k$ between $\mc{F}_\ell$ and $\mc{F}_{\ell'}$ must send $M_{i,o}$ onto either $M_{i',o}$ or $M_{j',o}$ and $M_{j,o}$ onto the other of the two. Consequently (and slightly informally), the (unordered) pair $(M_i, M_j)$ is isometric to the pair $(M_{i'}, M_{j'})$. This means that there exists an isomorphism of Dynkin diagrams $\DD_{M_i} \sqcup \DD_{M_j} \isoto \DD_{M_{i'}} \sqcup \DD_{M_{j'}}$, which extends easily -- just like we did at the end of the proof of Proposition \ref{congruenceFi} -- to an automorphism $P$ of $\DD_M$. We will prove that all four of the spaces $M_i, M_j, M_{i'}, M_{j'}$ have rank 1 and are thus isometric to hyperbolic spaces. If this is the case, then $P$ sends $\Uplambda_i \sqcup \Uplambda_j = \set{\upalpha, \upbeta}$ onto $\Uplambda_{i'} \sqcup \Uplambda_{j'} = \set{\upalpha', \upbeta'}$, hence the corresponding orthogonal transformation $\widehat{P}$ of $\mk{a}$ sends $H_\upalpha - H_\upbeta \in \ell$ to $\pm (H_{\upalpha'} - H_{\upbeta'}) \in \ell'$, which will complete the proof.

It follows from our argument involving the eigenspaces of $\ad(Z)$ and $\ad(F(Z))$ that $F$ establishes a bijection
\begin{equation}\label{bijection}
    \set{\mk{g}_{\upalpha_1}, \ldots, \widehat{\mk{g}}_\upalpha, \ldots, \widehat{\mk{g}}_\upbeta, \ldots, \mk{g}_{\upalpha_r}, \mk{g}_\upalpha \oplus \mk{g}_\upbeta} \isoto \set{\mk{g}_{\upalpha_1}, \ldots, \widehat{\mk{g}}_{\upalpha'}, \ldots, \widehat{\mk{g}}_{\upbeta'}, \ldots, \mk{g}_{\upalpha_r}, \mk{g}_{\upalpha'} \oplus \mk{g}_{\upbeta'}},
\end{equation}
sending subspaces on the left isomorphically onto the corresponding subspaces on the right (here a hat over a subspace means that it is omitted from the list). We have to consider two cases.

\textit{Case 1:} $F(\mk{g}_\upalpha \oplus \mk{g}_\upbeta) = \mk{g}_{\upgamma}$ for some $\upgamma \ne \upalpha', \upbeta'$. We first prove two Lie-theoretic lemmas of separate interest.

Observe that the adjoint action of the compact Lie group $Z_{\widetilde{K}}(\mk{a})$ on $\mk{g}$ preserves the restricted root space decomposition. Thus, this group acts orthogonally on each restricted root subspace.

\begin{lemma}\label{helgason_lemma}
For every $\upmu \in \Upsigma$ of multiplicity greater than one, the action $Z_{\widetilde{K}}(\mk{a}) \curvearrowright \mk{g}_\upmu$ is transitive on the unit sphere in $\mk{g}_\upmu$. It is even transitive when restricted to the connected Lie subgroup $K_0 \subseteq Z_{\widetilde{K}}(\mk{a})$ corresponding to the Lie subalgebra $\mk{k}_0$.
\end{lemma}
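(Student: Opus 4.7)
The plan is to reduce the statement to the classical two-point homogeneity of rank-one symmetric spaces of noncompact type, using a totally geodesic rank-one subsymmetric space $B_\upmu \subseteq M$ attached to $\upmu$. For a general (not necessarily simple) restricted root $\upmu$, this is the orbit through $o$ of the connected Lie subgroup $\widetilde{G}_\upmu \subseteq G$ corresponding to the semisimple Lie subalgebra $\widetilde{\mk{g}}_\upmu$ generated by $\mk{g}_\upmu \oplus \mk{g}_{-\upmu}$. Its tangent space at $o$ decomposes as $T_o B_\upmu = \Rl H_\upmu \oplus \mk{p}_\upmu \oplus \mk{p}_{2\upmu}$, where $\mk{p}_\upnu \defeq (\mk{g}_\upnu \oplus \mk{g}_{-\upnu}) \cap \mk{p}$ and $\mk{p}_{2\upmu}$ is taken to be zero when $2\upmu \notin \Upsigma$.

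The proof proceeds in two steps. First, the Cartan projection $X \mapsto \frac{1}{2}(X - \uptheta X)$ provides a $Z_{\widetilde{K}}(\mk{a})$-equivariant linear isomorphism $\mk{g}_\upmu \isoto \mk{p}_\upmu$; equivariance follows from the fact that every element of $\widetilde{K}$ commutes with $\uptheta$, and a short Killing-form computation shows that this map scales the $B_\uptheta$-norm by $\sqrt{2}$, so unit spheres correspond to round spheres. Hence it suffices to show that $Z_{\widetilde{K}}(\mk{a})$ -- and in fact $K_0$ -- acts transitively on the unit sphere of $\mk{p}_\upmu$. Second, let $\widetilde{K}_\upmu \defeq \widetilde{G}_\upmu \cap \widetilde{K}$ be the isotropy of $B_\upmu$ at $o$, and let $\widetilde{K}_\upmu^{H_\upmu}$ denote its subgroup of elements fixing $H_\upmu$ under the adjoint action. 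Since $B_\upmu$ is a noncompact rank-one symmetric space, it is two-point homogeneous; consequently the stabilizer $\widetilde{K}_\upmu^{H_\upmu}$ of the direction $H_\upmu$ acts transitively on the unit sphere of $(T_o B_\upmu) \ominus \Rl H_\upmu = \mk{p}_\upmu \oplus \mk{p}_{2\upmu}$.

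The crucial observation -- and the main technical step -- is that $\widetilde{K}_\upmu^{H_\upmu}$ centralizes all of $\mk{a}$, not merely $H_\upmu$. Indeed, every element of $\mk{a}_\upmu = \mk{a} \ominus \Rl H_\upmu$ lies in $\Ker \upmu \cap \Ker 2\upmu$ and hence commutes with every generator of $\widetilde{\mk{g}}_\upmu$; this forces $\widetilde{G}_\upmu \subseteq Z_{\widetilde{G}}(\mk{a}_\upmu)$, and combining with the fact that $\widetilde{K}_\upmu^{H_\upmu}$ stabilizes $H_\upmu$ yields $\widetilde{K}_\upmu^{H_\upmu} \subseteq Z_{\widetilde{K}}(\mk{a}_\upmu) \cap Z_{\widetilde{K}}(\Rl H_\upmu) = Z_{\widetilde{K}}(\mk{a})$. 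In particular $\widetilde{K}_\upmu^{H_\upmu}$ preserves the restricted root space decomposition of $\mk{g}$, and therefore preserves $\mk{p}_\upmu$ and $\mk{p}_{2\upmu}$ separately; transitivity on the unit sphere of $\mk{p}_\upmu \oplus \mk{p}_{2\upmu}$ thus implies transitivity on the unit sphere of $\mk{p}_\upmu$ alone. Finally, because $\dim \mk{g}_\upmu > 1$ the unit sphere of $\mk{p}_\upmu$ is connected, so transitivity of the compact group $\widetilde{K}_\upmu^{H_\upmu}$ passes to its identity component, which lies in $K_0 = Z^0_{\widetilde{K}}(\mk{a})$, establishing the stronger claim about $K_0$ and a fortiori that about $Z_{\widetilde{K}}(\mk{a})$.
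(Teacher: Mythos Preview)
Your overall strategy---reducing to a rank-one totally geodesic subspace $B_\upmu$ and exploiting its homogeneity---is natural and is indeed what underlies the standard argument. However, there is a genuine gap in the second step.

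You assert that two-point homogeneity of $B_\upmu$ implies that the stabilizer $\widetilde{K}_\upmu^{H_\upmu}$ of $H_\upmu$ acts transitively on the unit sphere of $\mk{p}_\upmu \oplus \mk{p}_{2\upmu}$. Two-point homogeneity only says that the \emph{full} isotropy group $\widetilde{K}_\upmu$ is transitive on the unit sphere of $T_o B_\upmu$; the claim that the stabilizer of a direction acts transitively on the perpendicular unit sphere would require something like three-point homogeneity, which rank-one symmetric spaces other than $\Rl H^n$ do not enjoy. Worse, the claim is outright \emph{false} whenever $2\upmu \in \Upsigma$: you yourself show that $\widetilde{K}_\upmu^{H_\upmu} \subseteq Z_{\widetilde{K}}(\mk{a})$, so it preserves $\mk{p}_\upmu$ and $\mk{p}_{2\upmu}$ separately, and no group respecting a nontrivial orthogonal splitting can be transitive on the unit sphere of the sum. (Concretely, for $B_\upmu \cong \Cx H^n$ one has $\dim \mk{p}_{2\upmu} = 1$ and $\widetilde{K}_\upmu^{H_\upmu}$ is essentially $U(n-1)$, fixing $\mk{p}_{2\upmu}$ pointwise.) Your final implication ``transitivity on the sphere of $\mk{p}_\upmu \oplus \mk{p}_{2\upmu}$ gives transitivity on the sphere of $\mk{p}_\upmu$'' thus has a false premise in the non-reduced case, and the argument does not conclude.

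The paper sidesteps the issue entirely by first reducing an arbitrary $\upmu$ to a simple root (or the double of one) via a Weyl chamber change, and then invoking Helgason's Problem~D.2 as a black box. If you want a self-contained fix along your lines, an infinitesimal approach works: for unit $X \in \mk{g}_\upmu$ and $Y \in \mk{g}_\upmu$ with $Y \perp X$, the element $Z = [\uptheta X, Y]$ lies in $\mk{k}_0$, and a Jacobi-identity computation gives $[Z,X] = |\upmu|^2 Y$ modulo a correction term coming from $[\mk{g}_{2\upmu}, \mk{g}_{-\upmu}]$; handling that correction when $2\upmu \in \Upsigma$ is exactly the missing work.
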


\begin{proof}
If $\upmu$ is a simple root or the double of a simple root, this is the content of Problem D.2 in \cite[Ch. III]{helgason_analysis} (see p. 585 for a solution). But it is a well-known fact that every root in any root system is either a simple root or the double of one under a suitable choice of a Weyl chamber. The second statement follows trivially from the first one, because $K_0$ is the connected component of the identity in $Z_{\widetilde{K}}(\mk{a})$ and the unit sphere in $\mk{g}_\upmu$ is connected.
\end{proof}

\begin{lemma}\label{pairing_lemma}
Let $\upmu, \upnu \in \Upsigma$ be any two roots such that $\upmu + \upnu$ is also a root (e.g. they can be simple roots connected by an edge in the Dynkin diagram $\DD_M$). Then the pairing $\mk{g}_\upmu \times \mk{g}_\upnu \to \mk{g}_{\upmu + \upnu}$ given by the Lie bracket is nondegenerate\footnote{An analogous result in the complex semisimple case is well known and is just a reformulation of the fact that $[\mk{g}_\upmu, \mk{g}_\upnu] = \mk{g}_{\upmu + \upnu}$, for all the root subspaces are one-dimensional over $\Cx$.}. In other words, for every $X \in \mk{g}_\upmu$ (resp., $Y \in \mk{g}_\upnu$), there exists $Y \in \mk{g}_\upnu$ (resp., $X \in \mk{g}_\upmu$) such that $[X,Y] \ne 0$.
\end{lemma}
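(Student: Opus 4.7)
The plan is to leverage the $\Ad(K_0)$-equivariance of the bracket together with the transitivity furnished by Lemma \ref{helgason_lemma} to reduce the nondegeneracy to the much weaker claim that $[\mk{g}_\upmu, \mk{g}_\upnu] \ne \set{0}$ whenever $\upmu + \upnu \iin \Upsigma$. Accordingly, the first step is to establish the stronger structural identity $[\mk{g}_\upmu, \mk{g}_\upnu] = \mk{g}_{\upmu+\upnu}$. This is classical; it appears, for instance, as Proposition 6.52 in Knapp's \emph{Lie Groups Beyond an Introduction}. A direct verification also works: complexify $\mk{g}$ with respect to a $\uptheta$-stable Cartan subalgebra containing $\mk{a}$, express each restricted root subspace as the direct sum of the absolute root subspaces whose roots restrict to the corresponding restricted root, and invoke the (essentially tautological, since complex root subspaces are one-dimensional) complex analogue of the identity.

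Next, I would observe that, for any $k \iin K_0$, $\Ad(k)$ centralizes $\mk{a}$ and therefore preserves each restricted root subspace. Consequently, the bracket $\mk{g}_\upmu \times \mk{g}_\upnu \to \mk{g}_{\upmu+\upnu}$ is $\Ad(K_0)$-equivariant, and the annihilator
$$
W \defeq \set{X \iin \mk{g}_\upmu \mid [X, \mk{g}_\upnu] = \set{0}}
$$
is an $\Ad(K_0)$-invariant linear subspace of $\mk{g}_\upmu$. If $\dim \mk{g}_\upmu = 1$, then $W$ is either $\set{0}$ or the whole $\mk{g}_\upmu$, and the latter would force $[\mk{g}_\upmu, \mk{g}_\upnu] = \set{0}$, contradicting the first step. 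If $\dim \mk{g}_\upmu \geqslant 2$, Lemma \ref{helgason_lemma} provides the transitivity of $K_0$ on the unit sphere of $\mk{g}_\upmu$, so, since $K_0$ acts orthogonally on $\mk{g}_\upmu$, any nonzero $\Ad(K_0)$-invariant linear subspace must coincide with $\mk{g}_\upmu$, again leading to the same contradiction. Hence $W = \set{0}$, i.e., $\ad(X)\colon \mk{g}_\upnu \to \mk{g}_{\upmu+\upnu}$ is nonzero for every $X \iin \mk{g}_\upmu \mysetminus \set{0}$. Repeating the argument with the roles of $\upmu$ and $\upnu$ swapped yields nondegeneracy in the other variable.

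The main obstacle is the preliminary fact $[\mk{g}_\upmu, \mk{g}_\upnu] \ne \set{0}$ itself: unlike in the complex semisimple setting, where it is immediate from the one-dimensionality of the root subspaces, its restricted-root analogue is noticeably subtler and is the only nontrivial piece of background being imported. Once it is in hand, however, the $K_0$-symmetry of the configuration renders everything else entirely formal --- the $\Ad(K_0)$-orbit of a single ``bad'' vector in $\mk{g}_\upmu$ is simply too large for any such vector to exist.
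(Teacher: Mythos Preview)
Your proof is correct and follows essentially the same approach as the paper's: both reduce nondegeneracy to the standard fact $[\mk{g}_\upmu, \mk{g}_\upnu] = \mk{g}_{\upmu+\upnu}$ by exploiting the $\Ad(K_0)$- (equivalently $Z_{\widetilde{K}}(\mk{a})$-) equivariance of the bracket together with the sphere-transitivity of Lemma~\ref{helgason_lemma}. The only cosmetic difference is that you phrase the argument via the invariant subspace $W$ and split off the one-dimensional case explicitly, whereas the paper argues directly that the orbit of a hypothetical annihilating vector spans $\mk{g}_\upmu$.
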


\begin{proof}
Assume the converse: there exists $X \in \mk{g}_\upmu$ such that $[X, \mk{g}_\upnu] = \set{0}$. For each $k \in Z_{\widetilde{K}}(\mk{a})$, we have $[kX, \mk{g}_\upnu] = [kX, k(\mk{g}_\upnu)] = k[X, \mk{g}_\upnu] = \set{0}$. But, according to Lemma \ref{helgason_lemma}, every vector in $\mk{g}_\upmu$ is a multiple of $kX$ for a suitable choice of $k$. Therefore, $[\mk{g}_\upmu, \mk{g}_\upnu] = \set{0}$, which contradicts the standard fact that $[\mk{g}_\upmu, \mk{g}_\upnu] = \mk{g}_{\upmu + \upnu} \ne \set{0}$.
\end{proof}

Now, assume there is $\upgamma' \in \Uplambda$ connected to $\upgamma$ by an edge in $\DD_M$. Note that $F^{-1}(\mk{g}_{\upgamma'})$ must lie within a single restricted root subspace, which we denote by $\mk{g}_{\upgamma''}, \upgamma'' \in \Uplambda$. By Lemma \ref{pairing_lemma}, for any nonzero $X \in \mk{g}_\upalpha$, there exists $Y \in \mk{g}_{\upgamma'}$ with $[F(X), Y] \ne 0$, which implies that $[X, F^{-1}(Y)] \ne 0$, hence $\upgamma''$ must be connected to $\upalpha$ in $\DD_M$. But we can also apply this argument to a nonzero $X \in \mk{g}_\upbeta$ to deduce that $\upgamma''$ must be connected to $\upbeta$ as well. Since $\upalpha$ and $\upbeta$ lie in different connected components of $\DD_M$, we arrive at a contradiction. Thus, the connected component of $\DD_M$ containing $\upgamma$ consists of nothing but $\upgamma$. But then the same must hold for $\upalpha$ and $\upbeta$: $\Uplambda_i = \set{\upalpha}, \Uplambda_j = \set{\upbeta}$. Indeed, otherwise we would have $\upgamma'$ connected to, say, $\upalpha$, meaning that $[\mk{g}_\upalpha, \mk{g}_{\upgamma'}] \ne \set{0}$, whereas $F([\mk{g}_\upalpha, \mk{g}_{\upgamma'}]) = [F(\mk{g}_\upalpha), F(\mk{g}_{\upgamma'})] \subseteq [\mk{g}_\upgamma, F(\mk{g}_{\upgamma'})] = \set{0}$. We conclude that $\Uplambda_i$ and $\Uplambda_j$ are singletons, i.e. $M_i$ and $M_j$ have rank 1. The same considerations can be applied to $F^{-1}$ to deduce that $M_{i'}$ and $M_{j'}$ are also of rank 1. This finishes the proof in Case 1.

\textit{Case 2:} $F(\mk{g}_\upalpha \oplus \mk{g}_\upbeta) = \mk{g}_{\upalpha'} \oplus \mk{g}_{\upbeta'}$. It suffices to show that $F(\mk{g}_\upalpha)$ is one of the subspaces $\mk{g}_{\upalpha'}, \mk{g}_{\upbeta'}$, while $F(\mk{g}_\upbeta)$ is the other one. Indeed, if this is the case, then we get a permutation $\widetilde{F}$ of $\Uplambda$, which -- as we have already seen in the proof for a generic choice of $\ell$ -- must be an automorphism of $\DD_M$ whose corresponding orthogonal transformation of $\mk{a}$ maps $\ell$ onto $\ell'$. Arguing by contradiction, we may assume, without loss of generality, that the projections of $F(\mk{g}_\upalpha)$ in both $\mk{g}_{\upalpha'}$ and $\mk{g}_{\upbeta'}$ are nonzero. We also do not lose generality by assuming that the projection of $F(\mk{g}_\upbeta)$ in $\mk{g}_{\upbeta'}$ is nonzero. By invoking Lemma \ref{pairing_lemma}, we see, just as we did for $\upgamma$ in Case 1, that $\Uplambda_{j'} = \set{\upbeta'}$. There might be two possibilities.

\textit{Subcase 2.1:} $F(\mk{g}_\upbeta) \subseteq \mk{g}_{\upbeta'}$. In this case, we can write $\mk{g}_\upalpha = U \oplus V$, where $F(U) = \mk{g}_{\upalpha'}$ and $F(V) \oplus F(\mk{g}_\upbeta) = \mk{g}_{\upbeta'}$. Mimicking the arguments from the proof in Case 1, we immediately see that $\Uplambda_j = \set{\upbeta}$. If there exists $\upgamma \in \Uplambda$ connected to $\upalpha$ in $\DD_M$, then, by Lemma \ref{pairing_lemma}, for any nonzero $X \in V$, there exists $Y \in \mk{g}_\upgamma$ such that $[X,Y] \ne 0$. But then $[F(X), \mk{g}_{\upbeta'}] \ne \set{0}$, which implies that $F(\mk{g}_\upgamma)$ must be a root subspace whose corresponding root is connected to $\upbeta$, which is a contradiction. Therefore, $\Uplambda_i = \set{\upalpha}$. Arguing in the same fashion, we deduce that $\Uplambda_{i'} = \set{\upalpha'}$, so all four of $M_i, M_j, M_{i'}, M_{j'}$ are of rank 1 and we are done.

\textit{Subcase 2.2}: The projection of $F(\mk{g}_\upbeta)$ in $\mk{g}_{\upalpha'}$ is nonzero. Just like we did for $\Uplambda_{j'}$, we see that $\Uplambda_{i'} = \set{\upalpha'}$. Arguing in a similar manner, we deduce that $\Uplambda_i = \set{\upalpha}$ and $\Uplambda_j = \set{\upbeta}$. Once again, all four of $M_i, M_j, M_{i'}, M_{j'}$ are of rank 1. This completes the proof of Subcase 2.2 and thus Proposition \ref{congruenceFl}.
\end{proof}

\printbibliography

\end{document}